\numberwithin{equation}{section}
\theoremstyle{plain}
\newtheorem{theorem}{Theorem}[section]
\newtheorem{definition}[theorem]{Definition}
\newtheorem{corollary}[theorem]{Corollary}
\newtheorem{lemma}[theorem]{Lemma}
\newtheorem{prop}[theorem]{Proposition}
\newtheorem{assumption}[theorem]{Assumption}{\bf}{\rm}
\theoremstyle{remark}
\newtheorem{remark}[theorem]{Remark}
\newcommand{\field}[1]{\mathbb{#1}}
\newcommand{\EE}{\field{E}}
\newcommand{\GG}{\field{G}}
\newcommand{\NN}{\field{N}}
\newcommand{\PP}{\field{P}}
\newcommand{\RR}{\field{R}}
\newcommand{\TT}{\field{T}}
\newcommand{\Bb}{{\mathcal B}}
\newcommand{\Ee}{{\mathcal E}}
\newcommand{\Ff}{{\mathcal F}}
\newcommand{\Ll}{{\mathcal L}}
\newcommand{\Mm}{{\mathcal M}}
\newcommand{\Pp}{{\mathcal P}}
\newcommand{\Qq}{{\mathcal Q}}
\def\<{\langle}
\def\>{\rangle}
\DeclareMathAlphabet{\mathpzc}{OT1}{pzc}{m}{it}
\begin{document}
\title{Transportation cost-information and concentration inequalities for bifurcating Markov chains}

\author{S. Val\`ere Bitseki Penda\thanks{CMAP, Ecole polytechnique
route de Saclay, 91128, Palaiseau, France. {\tt email}:
bitseki@cmap.polytechnique.fr} \and Mikael Escobar-Bach \thanks{Department of Mathematics and Computer Science, University of Southern Denmark, Campus 55,
5230 Odense M, Denmark {\tt email}:escobar@sdu.dk} \and Arnaud Guillin
\thanks{Institut Universitaire de France et Laboratoire de Math\'ematiques,
CNRS UMR 6620, Universit\'e Blaise Pascal, 24 avenue des Landais, BP
80026, 63177 Aubi\`ere, France. {\tt email}:
guillin@math.univ-bpclermont.fr}  }

\date{\today}
\maketitle

\begin{abstract}
We investigate the transportation cost-information inequalities for
bifurcating Markov chains which are a class of processes indexed by
binary tree. These processes provide models for cell growth when
each individual in one generation gives birth to two offsprings in
the next one. Transportation cost inequalities provide useful
concentration inequalities. We also study deviation inequalities for
the empirical means under relaxed assumptions on the Wasserstein
contraction of the Markov kernels. Applications to bifurcating non
linear autoregressive processes are considered: deviation
inequalities for pointwise estimates of the non linear leading
functions.\end{abstract}

{\small \noindent {\it Keywords:} Transportation cost-information
inequalities, Wasserstein distance, bifurcating Markov chains, deviation inequalities,
geometric ergodicity.
%\noindent {\it Mathematical Subject Classification:} \\
}

%\newpage
%\tableofcontents
%\newpage

\section{Introduction}

Roughly speaking, a bifurcating Markov chain is a Markov chain
indexed by a binary regular tree. This class of processes are well
adapted for the study of populations where each individual in one
generation gives birth to two offsprings in the next one. They were
introduced by Guyon \cite{Guyon} in order to study the
\textit{Escherichia coli} aging process.  Namely, when a cell divides into two offsprings, are the genetical traits identical for the two daughter cells? Recently, several models of
bifurcating Markov chains, or models using the theory of bifurcating
Markov chains, for example under the form of bifurcating
autoregressive processes, have been studied \cite{BH99,BH00,Guyon,
DHKR, DSGPM14}, showing that these processes are of great importance
to analysis of cell division. There is now an important literature
covering asymptotic theorems for bifurcating Markov chains such as
Law of Large Numbers, Central Limit Theorems, Moderate Deviation
Principle, Law of Iterated Logarithm, see for example \cite{Guyon,
Gu&Al,BDSGP,DM,SGM12,DSGPM14,BDG14} for recent references. These
limit theorems are particularly useful when applied to the
statistics of the bifurcating processes, enabling to provide
efficient tests to assert if the aging of the cell is different for
the two offsprings (see \cite{Gu&Al} for real case study). Of
course, these limit theorems may be considered only in the "ergodic"
case, i.e. when the law of the random lineage chain has an unique
invariant measure.

However, limit theorems are only asymptotical results and one is
often faced to study only datas with a size limited population.
It is thus very natural to control the statistics non
asymptotically. Such deviation inequalities (or concentration
inequalities) have been recently the subject of many studies and we
refer to the books of Ledoux \cite {Le01} and Massart \cite{Ma07}
for nice introductions on the subject, developing both i.i.d. case
and dependent case with a wide variety of tools (Laplace controls,
functional inequalities, Efron-Stein,...). It was one of the goal of
Bitseki et \textit{al.} \cite{BDG14} to investigate deviation
inequalities for additive functionals of bifurcating Markov chain.
In their work, one of the main hypothesis is that the Markov chain
associated to a random lineage of the population is uniformly
geometrically ergodic. It is clearly a very strong assumption,
nearly reducing interesting models to the compact case. The purpose
of this paper is to considerably weaken this hypothesis. More specifically, our
aim is to obtain deviation inequalities for bifurcating Markov chain
when the auxiliary Markov chains may satisfy some contraction
properties in Wasserstein distance, and some (uniform) integrabilty
property. This will be done with the help of transportation
cost-information inequalities and direct Laplace controls.
In order to present our result, we may now define properly the model of bifurcating Markov chains.

\subsection{Bifurcating Markov chains}

First we introduce some useful notations. Let $\TT$ be a regular
binary tree in which each vertex is seen as a positive integer
different from 0. For $r\in\mathbb{N}$, let
\begin{equation*}
\mathbb{G}_{r}=\Big\{2^{r},2^{r}+1,\cdots,2^{r+1}-1\Big\}, \quad
\mathbb{T}_{r}=\bigcup\limits_{q=0}^{r}\mathbb{G}_{q},
\end{equation*}
which denote respectively the $r$-th column and the first $(r+1)$
columns of the tree. The whole tree is thus defined by
\begin{equation*}
\TT = \bigcup_{r=0}^{\infty} \GG_{r}.
\end{equation*}
A column of a given vertex $n$ is $\GG_{r_{n}}$ with
$r_{n}=\lfloor\log_{2}n\rfloor$, where $\lfloor x\rfloor$ denotes
the integer part of the real number $x$.

In the sequel, we will see $\TT$ as a given population in which each
individual in one generation gives birth to two offsprings in the
next one. This will make easier the introduction of different
notions. The vertex $n$ will denote the individual $n$ and the
ancestor of individuals $2n$ and $2n+1$. The individuals who belong
to $2\NN$ (resp. $2\NN +1$) will be called individual of type 0
(resp. type 1). The column $\GG_{r}$ and the first $(r+1)$ columns
$\TT_{r}$ will denote respectively the $r$-th generation and the
first $(r+1)$ generations. The initial individual will be denoted
$1$.

For each individual $n$, we look into a random variable $X_{n}$,
defined on a probability space $(\Omega,\Ff,\PP)$ and which takes
its values in a metric space $(E,d)$ endowed with its Borel
$\sigma$-algebra $\Ee$. We assume that each pair of random variables
$(X_{2n},X_{2n+1})$ depends of the past values $(X_{m},
m\in\TT_{r_{n}})$ only through $X_{n}$. In order to describe this
dependance, let us introduce the following notion.

\begin{definition}[$\mathbb{T}$-transition probability, see (\cite{Guyon})]
We call $\mathbb{T}$-transition probability any mapping $P: E\times
\Ee^{2}\rightarrow [0,1]$ such that
\begin{itemize}
\item $P(\cdot,A)$ is measurable for all $A\in \Ee^{2}$,

\item $P(x,\cdot)$ is a probability measure on $(E^{2},\Ee^{2})$
for all $x\in E$.
\end{itemize}
\end{definition}

In particular, for all $x,y,z\in E$, $P(x,dy,dz)$ denotes the
probability that the couple of the quantities associated with the
children are in the neighbourhood of $y$ and $z$ given that the
quantity associated with their mother is $x$.

For a $\TT$-transition probability $P$ on $E\times \Ee^{2}$, we
denote by $P_{0}$, $P_{1}$ the first and the second marginal of $P$,
that is $P_{0}(x,A)=P(x,A\times E)$, $P_{1}(x,A)=P(x,E\times A)$ for
all $x\in E$ and $A\in \Ee$. Then, $P_{0}$ (resp. $P_{1}$) can be
seen as the transition probability associated to individual of type
0 (resp. type 1).

For $p\geq 1$, we denote by $\Bb(E^{p})$ (resp. $\Bb_{b}(E^{p})$),
the set of all $\Ee^{p}$-measurable (resp. $\Ee^{p}$-measurable and
bounded) mappings $f: E^{p}\rightarrow \mathbb{R}$. For $f\in
\Bb(E^{3})$, we denote by $Pf \in \Bb(E)$ the function
\begin{equation*}
x\mapsto Pf(x)=\int_{S^{2}}f(x,y,z)P(x,dy,dz), \,\,{\rm  when\, it\,
is\, defined}.
\end{equation*}

We are now in position to give a precise definition of bifurcating
Markov chain.

\begin{definition}[Bifurcating Markov Chains, see (\cite{Guyon})]
Let $(X_{n}, n\in \TT)$ be a family of $E$-valued random variables
defined on a filtered probability space $(\Omega, \Ff, (\Ff_{r},
r\in\NN), \PP)$. Let $\nu$ be a probability on $(E, \Ee)$ and $P$ be
a $\TT$-transition probability. We say that $(X_{n}, n\in \TT)$ is a
$(\Ff_{r})$-bifurcating Markov chain with initial distribution $\nu$
and $\TT$-transition probability $P$ if
\begin{itemize}
\item $X_{n}$ is $\Ff_{r_{n}}$-measurable for all $n\in
\TT$,
\item $\Ll(X_{1})=\nu$,
\item for all $r\in \NN$ and for all family $(f_{n}, n\in
\GG_{r})\subseteq \Bb_{b}(E^{3})$
\begin{equation*}
\EE\left[\prod\limits_{n\in
\GG_{r}}f_{n}(X_{n},X_{2n},X_{2n+1})\Big|
\Ff_{r}\right]=\prod\limits_{n\in \GG_{r}}Pf_{n}(X_{n}).
\end{equation*}
\end{itemize}
\end{definition}

In the following, when unprecised, the filtration implicitly used
will be $\Ff_r=\sigma(X_i, i\in {\mathbb T}_r)$.

\begin{remark}
We may of course also consider in this work bifurcating Markov chains on a $a$-ary tree (with $a\ge 2$) with no additional technicalities, but heavy additional notations. In the same spirit, Markov chains of higher order (such as BAR processes considered in \cite{BiDj12}) could be handled by the same techniques. A non trivial extension would be the case of bifurcating Markov chains on a Galton-Watson tree (see for example \cite{BP} under very strong assumptions), that we will consider elsewhere.
\end{remark}

%%%%%%%%%%%%%
%%%%%%%%%%%%%

\subsection{Transportation cost-information inequality}

We recall that $(E,d)$ is a metric space endowed with its Borel
$\sigma$-algebra $\Ee$. Given $p\geq 1$, the $L^{p}$-Wasserstein
distance between two probability measures $\mu$ and $\nu$ on $E$ is
defined by
\begin{equation*}\label{eq:Lpwassertein}
W_{p}^{d}(\nu,\mu) = \inf\left(\int\int
d(x,y)^{p}d\pi(x,y)\right)^{1/p},
\end{equation*}
where the infimum is taken over all probability measures $\pi$ on
the product space $E\times E$ with marginal distributions $\mu$ and
$\nu$ (say, coupling of $(\mu,\nu)$). This infimum is finite as soon
as $\mu$ and $\nu$ have finite moments of order $p$. When
$d(x,y)=\mathds{1}_{x\neq y}$ (the trivial measure),
$2W_{1}^{d}(\mu,\nu) = \|\mu-\nu\|_{TV}$, the total variation of
$\mu-\nu$.

The Kullback information (or relative entropy) of $\nu$ with respect
to $\mu$ is defined as
\begin{equation*}\label{eq:kullback}
H(\nu/\mu) = \begin{cases} \int \log \frac{d\nu}{d\mu} d\nu, \quad
\text{if $\nu\ll\mu$} \\ +\infty \hspace{1.55cm} \qquad \text{else}.
\end{cases}
\end{equation*}

\begin{definition}[$L^{p}$-transportation cost-inequality]
We say that the probability measure $\mu$ satisfies the
$L^{p}$-transportation cost-information inequality on $(E,d)$ (and
we write $\mu\in T_{p}(C)$) if there is some constant $C>0$ such
that for any probability measure $\nu$,
\begin{equation*}\label{eq:transport_inequality}
W_{p}^{d}(\mu,\nu) \leq \sqrt{2C H(\nu/\mu)}.
\end{equation*}
\end{definition}

This transportation inequality have been introduced by Marton
\cite{Mar86,Mar96} as a tool for (Gaussian) concentration of measure
property. The following result will be crucial in the sequel. It
gives a characterization of $L^{1}$-transportation cost-inequality
in term of concentration inequality. It is of course one of the main
tool to get deviation inequalities (via Markov inequality).

\begin{theorem}[\cite{BG99}]\label{thm:bobkov-gotze}
$\mu$ satisfies the $L^{1}$-transportation cost-information
inequality (say $T_1$) on $(E,d)$ with constant $C>0$, that is, $\mu\in
T_{1}(C)$, if and only if for any Lipschitzian function $F: (E,d)
\rightarrow \RR$, $F$ is $\mu$-integrable and
\begin{equation*}\label{eq:bobkov_gotze}
\int_{E} \exp\left(\lambda\left(F-\langle
F\rangle_{\mu}\right)\right) d\mu \leq
\exp\left(\frac{\lambda^{2}}{2}C\|F\|_{Lip}^{2}\right) \quad \forall
\lambda\in \RR,
\end{equation*}
where $\langle F\rangle_{\mu} = \int_{E}F d\mu$ and
\begin{equation*}\label{eq:norm_lip}
\|F\|_{Lip} = \sup_{x\neq y} \frac{|F(x)-F(y)|}{d(x,y)} < +\infty.
\end{equation*}
In particular, we have the concentration inequality
\begin{equation*}\label{eq:bobgot_con}
\mu\left(F-\langle F\rangle_{\mu}\leq -t\right) \vee
\mu\left(F-\langle F\rangle_{\mu}\geq t\right) \leq
\exp\left(-\frac{t^{2}}{2C\|F\|_{Lip}^{2}}\right) \quad \forall
t\in\RR.
\end{equation*}
\end{theorem}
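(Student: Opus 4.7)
The plan is to exploit two classical dualities: the Kantorovich--Rubinstein duality
\[
W_1^d(\mu,\nu) = \sup\Bigl\{ \int F\,d\nu - \int F\,d\mu \,:\, \|F\|_{Lip} \leq 1 \Bigr\},
\]
and the Donsker--Varadhan variational formula for the relative entropy,
\[
H(\nu/\mu) = \sup\Bigl\{ \int G\,d\nu - \log \int e^G\,d\mu \,:\, G \in \Bb_b(E) \Bigr\}.
\]
Together they bridge Wasserstein estimates and Laplace-type bounds, and each implication of the theorem should follow from one of them.

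For the direct implication ($\mu \in T_1(C)$ $\Rightarrow$ Laplace bound), I would fix a Lipschitz $F$ and, by homogeneity, reduce to $\|F\|_{Lip}=1$. Using the two dualities together, for every $\lambda \in \RR$,
\[
\log \int e^{\lambda (F - \langle F\rangle_\mu)}\,d\mu \;=\; \sup_{\nu}\Bigl( \lambda \int F\,d(\nu-\mu) - H(\nu/\mu)\Bigr) \;\leq\; \sup_{\nu}\Bigl( |\lambda|\sqrt{2C\,H(\nu/\mu)} - H(\nu/\mu)\Bigr),
\]
the supremum being taken over probability measures $\nu$. Setting $u=\sqrt{H(\nu/\mu)} \geq 0$, one-variable calculus gives $\sup_{u\geq 0}(|\lambda|\sqrt{2C}\,u - u^2) = \lambda^2 C/2$, which is exactly the desired bound. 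To apply the Donsker--Varadhan formula only to bounded test functions, I would first work with the truncation $F_M = (-M) \vee F \wedge M$ (still $1$-Lipschitz) and then pass to the limit $M \to \infty$ by monotone/Fatou arguments; this simultaneously delivers the $\mu$-integrability of $F$, since the uniform exponential bound implies Gaussian tails for $F$.

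For the converse, I would apply Kantorovich--Rubinstein to rewrite $W_1^d(\mu,\nu)$ as a supremum over $1$-Lipschitz $F$. For each such $F$ and any $\lambda>0$, the assumed Laplace bound combined with the variational formula written in the form $\int \lambda F\,d\nu \leq H(\nu/\mu) + \log \int e^{\lambda(F-\langle F\rangle_\mu)}\,d\mu + \lambda \langle F\rangle_\mu$ gives
\[
\int F\,d(\nu-\mu) \;\leq\; \frac{H(\nu/\mu)}{\lambda} + \frac{\lambda C}{2}.
\]
Optimizing in $\lambda>0$ (the minimum is attained at $\lambda=\sqrt{2H(\nu/\mu)/C}$) yields $\int F\,d(\nu-\mu) \leq \sqrt{2C\,H(\nu/\mu)}$, and taking the supremum over $1$-Lipschitz $F$ delivers $\mu \in T_1(C)$.

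Finally, the concentration inequality is a routine Chebyshev step: for $t>0$ and $\lambda>0$,
\[
\mu\bigl(F-\langle F\rangle_\mu \geq t\bigr) \;\leq\; e^{-\lambda t}\int e^{\lambda(F-\langle F\rangle_\mu)}\,d\mu \;\leq\; \exp\!\Bigl(\tfrac{\lambda^2 C\|F\|_{Lip}^2}{2} - \lambda t\Bigr),
\]
minimized at $\lambda = t/(C\|F\|_{Lip}^2)$; the lower tail is obtained by applying the same argument to $-F$. The one delicate point, which I expect to be the main obstacle, is handling the $\mu$-integrability of an unbounded Lipschitz $F$ in the direct implication: this is why the truncation $F_M$ together with uniform control of the resulting Laplace bounds is essential before letting $M \to \infty$.
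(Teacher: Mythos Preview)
The paper does not give its own proof of this statement: Theorem~\ref{thm:bobkov-gotze} is quoted from \cite{BG99} and used as a black box throughout. So there is no ``paper's proof'' to compare against. Your argument is correct and is essentially the standard proof of the Bobkov--G\"otze theorem: Kantorovich--Rubinstein duality together with the Donsker--Varadhan variational formula for entropy, with the concentration bound following by the usual Chernoff optimization. The truncation step you flag is indeed the only mild technicality, and you handle it appropriately.
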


In this work we will focus on transportation inequality $T_1$
mainly. There is now a considerable literature around these
transportation inequalities. As a flavor, let us cite first the
characterization of $T_1$ as a Gaussian integrability property
\cite{DGW04} (see also \cite{Go06}).
\begin{theorem}[\cite{DGW04}]\label{thm:dgw}
 $\mu$ satisfies the $L^{1}$-transportation cost-information
inequality (say $T_1$) on $(E,d)$ if and only if there exists
$\delta>0$ and $x_0\in E$ such that
$$\mu\left(e^{\delta d^2(x,x_0)}\right)<\infty,$$
and the constant of the Transportation inequality can be made
explicit.
\end{theorem}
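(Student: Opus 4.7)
I will treat the two implications separately, using Theorem~\ref{thm:bobkov-gotze} as the bridge between $T_1$ and sub-Gaussian Laplace estimates.

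For necessity, assume $\mu\in T_1(C)$, fix $x_0\in E$, and take the $1$-Lipschitz function $F(x)=d(x,x_0)$. By Theorem~\ref{thm:bobkov-gotze}, $F$ is $\mu$-integrable and $\int_E e^{\lambda(F-\langle F\rangle_\mu)}d\mu \leq e^{C\lambda^2/2}$ for every $\lambda\in\RR$. Chebyshev's inequality then yields the sub-Gaussian deviation $\mu(|F-\langle F\rangle_\mu|>t)\leq 2e^{-t^2/(2C)}$, and a layer-cake decomposition
\[
\int e^{\delta'(F-\langle F\rangle_\mu)^2}\,d\mu \;=\; 1 + \int_0^\infty 2\delta'\, t\, e^{\delta' t^2}\,\mu\!\left(|F-\langle F\rangle_\mu|>t\right)dt
\]
shows that the left-hand side is finite for any $\delta'<1/(2C)$. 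The elementary bound $F^2\leq 2(F-\langle F\rangle_\mu)^2+2\langle F\rangle_\mu^2$ then upgrades this to $\int_E e^{\delta d^2(\cdot,x_0)}d\mu<\infty$ for some explicit $\delta>0$.

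For sufficiency, assume $\int_E e^{\delta d^2(\cdot,x_0)}d\mu<\infty$. By Theorem~\ref{thm:bobkov-gotze} it suffices to exhibit a constant $C>0$ such that every $1$-Lipschitz $F$ satisfies $\int e^{\lambda(F-\langle F\rangle_\mu)}d\mu\leq e^{C\lambda^2/2}$ for all $\lambda\in\RR$. Setting $G=F-\langle F\rangle_\mu$, the Lipschitz condition and centering give $|G(x)|\leq d(x,x_0)+m$ with $m=\int d(\cdot,x_0)\,d\mu<\infty$ (finite by Gaussian integrability). The plan is then to combine two complementary estimates of $\log\int e^{\lambda G}d\mu$: for small $|\lambda|$, a second-order Taylor expansion yields a bound of the form $\lambda^2\,\mathrm{Var}_\mu(G)/2+o(\lambda^2)$, with $\mathrm{Var}_\mu(G)\leq \int(d(\cdot,x_0)+m)^2d\mu<\infty$; for large $|\lambda|$, Young's inequality $\lambda u\leq \alpha u^2/2+\lambda^2/(2\alpha)$ applied with $u=G(x)$ and $\alpha<\delta$ gives
\[
\int e^{\lambda G}\,d\mu \;\leq\; e^{\lambda^2/(2\alpha)}\int e^{\alpha G^2/2}\,d\mu,
\]
where the last factor is finite by the Gaussian integrability hypothesis (applied to $G^2\leq 2d^2(\cdot,x_0)+2m^2$). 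Splicing the two regimes at a suitable threshold produces a uniform constant $C$ depending only on $\delta$, $m$, and $\int e^{\delta d^2(\cdot,x_0)}d\mu$.

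The main obstacle is tracking a clean and uniform constant $C$ through this two-regime splicing of the Laplace bound: the tail behavior is furnished by Young's inequality (losing a factor dictated by $\alpha/\delta$) while the behavior near zero is governed by the variance, and producing a single $C\lambda^2/2$ upper envelope, uniform both in $\lambda$ and in the choice of $1$-Lipschitz $F$, is the core technical effort. The resulting $C$ is explicit but generally not sharp, matching the final clause "the constant of the Transportation inequality can be made explicit" in the statement.
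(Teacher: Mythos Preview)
The paper does not prove Theorem~\ref{thm:dgw}: it is quoted as a known result from \cite{DGW04} and no argument is supplied. There is therefore nothing to compare your proposal against in this manuscript; I can only assess it on its own merits.

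Your necessity direction is correct and standard: applying Theorem~\ref{thm:bobkov-gotze} to $F=d(\cdot,x_0)$ gives sub-Gaussian tails for $F-\langle F\rangle_\mu$, and the layer-cake integration together with $F^2\le 2(F-\langle F\rangle_\mu)^2+2\langle F\rangle_\mu^2$ yields $\int e^{\delta d^2(\cdot,x_0)}\,d\mu<\infty$ for an explicit $\delta$.

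Your sufficiency sketch is also correct in outline, and you identify the genuine issue: Young's inequality gives $\Lambda(\lambda):=\log\int e^{\lambda G}\,d\mu\le \log M+\lambda^2/(2\alpha)$ with $M$ independent of the $1$-Lipschitz $F$, but the additive $\log M$ spoils the pure quadratic form near $\lambda=0$. One clean way to close the argument (rather than a vague Taylor expansion) is to bound $\Lambda''(\lambda)=\mathrm{Var}_{\mu_\lambda}(G)$, where $d\mu_\lambda\propto e^{\lambda G}d\mu$. Since $|G|\le h:=d(\cdot,x_0)+m$ uniformly in $F$, and $h$ has Gaussian tails under $\mu$, one gets $\Lambda''(\lambda)\le \int h^2 e^{|\lambda|h}\,d\mu$, which is bounded by a constant $V$ on any compact interval $|\lambda|\le\lambda_0$. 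Choosing $\lambda_0=\sqrt{2\alpha\log M}$ then gives $\Lambda(\lambda)\le V\lambda^2/2$ for $|\lambda|\le\lambda_0$ (using $\Lambda(0)=\Lambda'(0)=0$) and $\Lambda(\lambda)\le \lambda^2/\alpha$ for $|\lambda|\ge\lambda_0$, so $C=\max(V,2/\alpha)$ works and is explicit in $\delta$, $m$, and $\int e^{\delta d^2(\cdot,x_0)}d\mu$. This is essentially the route taken in \cite{DGW04}; your plan is sound, but if you flesh it out you should replace the ``$o(\lambda^2)$'' hand-wave by this uniform second-derivative bound, since uniformity over all $1$-Lipschitz $F$ is exactly what has to be checked.
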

There is also a large deviations characterization \cite{GL07}.
Recent striking results on transportation inequalities have been
obtained for $T_2$, namely that they are equivalent to dimension
free Gaussian concentration \cite{Go09}, or to a restricted class of
logarithmic Sobolev inequalities \cite{GRS11}. Se also \cite{CG06}
or \cite{CGW10} for practical criterion based on Lyapunov type
criterion and we refer for example to \cite{GL10} or \cite{Villani} for
surveys on transportation inequality. One of the main aspect  of
transportation inequality is their tensorization property, i.e.
$\mu^{\otimes n}$ will satisfy some transportation measure if $\mu$
does (with dependence on the dimension $n^{2/p-1}$) . One important development was to consider such a property for
dependent sequences such as Markov chains. In \cite{DGW04}, Djellout
et \textit{al.}, generalizing result of Marton \cite{Mar97}, have
provided conditions under which the law of a homogeneous Markov
chain $(Y_{k})_{1\leq k\leq n}$ on $E^{n}$ satisfies the
$L^{p}$-transportation cost-information inequality $T_p$ with
respect to the metric
\begin{equation*}
d_{l_{p}}(x,y) := \left(\sum_{i=1}^{n}
d(x_{i},y_{i})^{p}\right)^{1/p}.
\end{equation*}
We will follow similar ideas here to establish the
$L^{p}$- transportation cost-information inequality for the law of a
bifurcating Markov chain $(X_{i})_{1\leq i\leq N}$ on $E^{N}$. This
will allow us to obtain concentration inequalities for bifurcating
Markov chains under hypotheses largely weaker than those of Bitseki et
\textit{al.} \cite{BDG14}. It would also be tempting to generalize the approach of \cite{GLWY09} to Markov chains and bifurcating Markov chains to get directly deviation inequalities for Markov chains, w.r.t. the invariant measure. However it would need to restrict to reversible Markov chains and thus not directly suited to bifurcating Markov chains and would thus recquire new ideas.

\begin{remark}
There are natural generalizations of the $T_1$ inequality often
denoted $\alpha-T_1$ inequality, where $\alpha$ is a non negative
convex lower semi continuous function vanishing at 0. We say that
the probability measure $\mu$ satisfies $\alpha-T_p(C)$ if for any
probability measure $\nu$
$$\alpha\left(W_1(\nu,\mu)\right) \le 2C\, H(\nu/\mu).$$
The usual $T_1$ inequality is then the case where $\alpha(t)=t^2$.
Gozlan \cite{Go06} has generalized Bobkov-G\"otze's Laplace
transform control \cite{BG99} and Djellout-Guillin-Wu \cite{DGW04}
integrability criterion to this setting enabling to recover sub or
super Gaussian concentration. The result of the following section
can be generalized to this setting, however adding technical details
and heavy notations. Details will thus be left to the reader.
\end{remark}

%%%%%%%%%%%
%%%%%%%%%%%

\section{Transportation cost-information inequalities for
bifurcating Markov chains}\label{sec:trans_bmc}

Let $(X_{i},i\in\TT)$ be a bifurcating Markov chain on $E$ with
$\TT$-probability transition $P$ and initial measure $\nu$. For
$p\geq 1$ and $C>0$, we consider the following assumption that we
shall call ($H_{p}(C)$) in the sequel.
\begin{assumption}[$H_{p}(C)$]\label{ass:hp} $\,$
\begin{itemize}
\item [(a)] $\nu\in T_{p}(C)$;
\item [(b)] $P(x,\cdot,\cdot) \in T_{p}(C)$, $\forall x\in E$ ;
\item [(c)] $W_{p}^{d}(P(x,\cdot,\cdot),P(\tilde{x},\cdot,\cdot)) \leq
q \,d(x,\tilde{x})$, $\forall x, \tilde{x}\in E$ and some
$q>0$.
\end{itemize}
\end{assumption}
It is important to remark that under $(H_p(C)), (c)$ we have that
there exists $r_0$ and $r_1$ smaller than $q$ such that for $b=0,1$
$$W_{p}^{d}(P_{b}(x,\cdot),P_{b}(\tilde{x},\cdot)) \leq
r_b \,d(x,\tilde{x}),\qquad\qquad\forall x, \tilde{x}\in E.$$ Note
also that when $P(x,dy,dz)=P_0(x,dy)P_1(x,dz)$, then these last two
stability results in Wasserstein contraction implies $(H_p(C)), (c)$
with $q\le (r_0^p+r_1^p)^{1/p}$ (using trivial coupling). We may
remark also that by $(H_p(C)), (b)$, $P_0$ and $P_1$ also satisfies
(uniformly) a transportation inequality. Let us note that thanks to
the H\"{o}lder inequality, $(H_{p}(C))$
implies $(H_{1}(C))$. \\
We do not suppose here that $q$, $r_0$ and $r_1$ are strictly less than
1, and thus the two marginal chains, as well as the bifurcating one, are not  a priori contractions. We are thus
considering here both "stable" and "unstable" cases.

We then have the following result for the law of the whole
trajectory on the binary tree.

\begin{theorem}\label{prop:trans_bmc}
Let $n\in \NN$ and let $\Pp$ be the law of $(X_{i})_{1\leq i\leq
\TT_n}$ and denote $N=|\TT_n|$. We assume Assumption \ref{ass:hp}
for $1\leq p\leq2$. Then $\Pp \in T_{p}\left(C_{N}\right)$ where
\begin{equation*}\displaystyle
C_{N} = \begin{cases} C\frac{N^{2/p-1}}{\left(1 - q\right)^{2}}
\hspace{3.8cm} \text{if $\quad q<1$} \\  C
\exp\left(2-\frac{2}{p}\right) N^{2/p+1} \hspace{1.5cm} \text{if
$\quad q=1$} \\  C \left(N+1\right)
\left(\frac{\exp\left({q-1}\right)r^{pN}}{r^{p}-1}\right)^{2/p}
\hspace{0.51cm} \text{if $\quad q>1$}.
\end{cases}
\end{equation*}
\end{theorem}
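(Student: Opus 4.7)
My plan is to construct a level-by-level coupling of $\Pp$ with any $\tilde\Pp\ll\Pp$, derive a Minkowski-type recursion on the $L^p$-cost at each generation, and close the three regimes by estimating the resulting geometric sums. This is the tree analogue of the Djellout-Guillin-Wu tensorization \cite{DGW04} for Markov chains, the new ingredients being the coupling of sibling pairs and the entropy-to-sum loss on the $2^{r-1}$ parents of generation $r$.

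First, I would disintegrate both laws along the tree. The Markov property gives $\Pp(d\vec x) = \nu(dx_1)\prod_{n\in\TT_{n-1}}P(x_n, dx_{2n}, dx_{2n+1})$; since $\tilde\Pp\ll\Pp$, one writes $\tilde\Pp(d\vec x) = \tilde\nu(dx_1)\prod_{n\in\TT_{n-1}}\tilde{P}_n(x_{\TT_{r_n}}; dx_{2n}, dx_{2n+1})$, where the conditional kernel $\tilde P_n$ may depend on the whole past. The chain rule for relative entropy yields
$$H(\tilde\Pp/\Pp) = H(\tilde\nu/\nu) + \sum_{n\in\TT_{n-1}} h_n,\qquad h_n := \EE_{\tilde\Pp}\bigl[H\bigl(\tilde{P}_n(\cdot)/P(\tilde X_n,\cdot,\cdot)\bigr)\bigr].$$

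I would then build a coupling $(X_i,\tilde X_i)_{i\in\TT_n}$ inductively: at the root, take the $T_p(C)$-optimal coupling of $(\nu,\tilde\nu)$ from (a); at each parent $n\in\GG_{r-1}$, conditional on $(X_n,\tilde X_n)$, couple $P(X_n,\cdot,\cdot)$ with $\tilde{P}_n(\cdot)$ on $(E^2, d_2)$ with $d_2^p((y,z),(y',z')) := d(y,y')^p + d(z,z')^p$, by chaining, via the $W_p^{d_2}$-triangle inequality, the Wasserstein-contraction coupling of $P(X_n,\cdot,\cdot)$ with $P(\tilde X_n,\cdot,\cdot)$ (cost $\le q\,d(X_n,\tilde X_n)$ by (c)) and the $T_p(C)$-optimal coupling of $P(\tilde X_n,\cdot,\cdot)$ with $\tilde{P}_n(\cdot)$ (cost $\le \sqrt{2C\,H_n}$ by (b)), where $H_n$ is the pointwise conditional relative entropy satisfying $\EE_{\tilde\Pp}[H_n] = h_n$. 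Setting $A_r := (\EE\sum_{i\in\GG_r} d(X_i,\tilde X_i)^p)^{1/p}$, taking $L^p$-norms of the resulting per-node conditional bound, summing over $n\in\GG_{r-1}$ via the $l^p$-Minkowski inequality, then applying Jensen to the concave map $x\mapsto x^{p/2}$ (since $p\le 2$) and the power-mean inequality on the $2^{r-1}$ siblings, I obtain the recursion
$$A_r \le q\,A_{r-1} + \sqrt{2C}\,M_r\,\eta_r,\qquad M_r := 2^{(r-1)(1/p-1/2)},\quad \eta_r := \Bigl(\sum_{n\in\GG_{r-1}} h_n\Bigr)^{1/2},$$
with $A_0 \le \sqrt{2C\,H(\tilde\nu/\nu)}$. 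Iterating yields $A_r \le \sqrt{2C}\sum_{s=0}^r q^{r-s}M_s\eta_s$, and the total cost satisfies $W_p^{d_{l_p}}(\Pp,\tilde\Pp)^p \le \sum_{r=0}^n A_r^p$.

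The three regimes are then separated by how the iterated geometric sum is estimated, together with a final H\"older or Cauchy-Schwarz step that uses $\sum_s \eta_s^2 \le H(\tilde\Pp/\Pp)$. For $q<1$, swapping the $r$- and $s$-sums extracts $(1-q)^{-2}$, while $\sum_s M_s^{2p}\lesssim N^{2-p}$ supplies the dimensional prefactor $N^{2/p-1}$. For $q=1$ the polynomial factor $r+1$ generated by $\sum_s 1$ is absorbed through $\sum_r(r+1)^{p-1}$, producing the exponent $N^{2/p+1}$ and the constant $\exp(2-2/p)$ via Young-type estimates $(1+1/r)^{r(p-1)}\le e^{p-1}$. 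For $q>1$, the sum $\sum_s q^{r-s}\le q^{r+1}/(q-1)$ combined with $\sum_s q^{ps}\le q^{p(N+1)}/(q^p-1)$ and the bound $(1-1/q)^{q-1}\le e^{q-1}$ produces the announced closed form. The main technical obstacle will be the precise bookkeeping in the $q=1$ case, where the dimensional factor $M_r$, the polynomial factor in $r$, and the entropy decomposition must be combined exactly to reach the stated closed-form constant; one must also check that the conditional $T_p$ and Wasserstein couplings can be glued measurably into a single coupling on the whole tree.
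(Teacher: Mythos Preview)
Your strategy is essentially the paper's: disintegrate along the tree, couple each sibling pair via the triangle inequality $W_p(\tilde P_n,P(X_n,\cdot))\le W_p(\tilde P_n,P(\tilde X_n,\cdot))+W_p(P(\tilde X_n,\cdot),P(X_n,\cdot))$, bound the two pieces by (b) and (c), iterate, and close with a H\"older/Jensen step using concavity of $x\mapsto x^{p/2}$ and additivity of entropy. The only substantive difference is in how the two contributions are separated at each step: the paper raises to the $p$-th power and uses Young's inequality $(u+v)^p\le a^{p-1}u^p+b^{p-1}v^p$ with free parameters $a,b$ (later chosen as $b=q^{-1}$ when $q<1$ and $b=1+1/N$ when $q\ge1$), obtaining a recursion on $\sum_i\mathcal A_i$; you instead keep the $L^p$-norm and use Minkowski, obtaining the cleaner recursion $A_r\le qA_{r-1}+\sqrt{2C}\,M_r\eta_r$. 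Both routes land on the same geometric-sum estimates, and your generation-by-generation bookkeeping is in fact what the paper's recursion $\sum_{i\le|\TT_{n-1}|}\mathcal A_i\lesssim\cdots+\,b^{p-1}q^p\sum_{i\le|\TT_{n-2}|}\mathcal A_i$ is doing in disguise.

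Two small points. First, in your $q<1$ sketch the exponent on $M_s$ is off: the H\"older pairing that isolates $\sum_s\eta_s^2=H$ forces the dual exponent $2p/(2-p)$, so one needs $\sum_s M_s^{2p/(2-p)}=\sum_s 2^{s-1}\lesssim N$, whose $((2-p)/2)$-th power then supplies $N^{1-p/2}$ (equivalently $N^{2/p-1}$ in $C_N$); your ``$\sum_s M_s^{2p}\lesssim N^{2-p}$'' does not mesh with that H\"older step. Second, your handling of the $q\ge1$ regimes is where the real work lies: in the Minkowski formulation the analogue of the paper's $b=1+1/N$ trick is not automatic, and you will need to reintroduce a weighted H\"older (or go back to Young with a near-critical split) to produce the exact constants $e^{2-2/p}$ and $e^{q-1}$.
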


Before the proof of this result, let us make the following
notations. For a Polish space $\chi$, we denote by $\Mm_{1}(\chi)$
the space of probability measures on $\chi$. For $x\in E^{N}$,
$x^{i} := (x_{1},\cdots,x_{i})$. For $\mu\in\Mm_{1}(E^{N})$, let
$(x_{1},\cdots,x_{N})\in E^{N}$ be distributed randomly according to
$\mu$. We denote by $\mu^{i}$ the law of $x^{2i+1}$, and by
$\mu_{x^{2i-1}}^{i}$ the conditional law of $(x_{2i},x_{2i+1})$
given $x^{2i-1}$ with the convention $\mu^{1}_{x^{0}} = \mu^{1}$,
where $x^{0}=x_{0}$ is some fixed point. In particular, if $\mu$ is
the law of a bifurcating Markov chain with $\TT$-probability
transition $P$, then $\mu_{x^{2i-1}}^{i} = P(x_{i},\cdot,\cdot)$.

For the convenience of the readers, we recall the formula of
additivity of entropy (see for e.g. \cite{Villani}, Lemma 22.8).
\begin{lemma}\label{lem:add_entropy}
Let $N\in \NN$, let $\chi_{1},\cdots,\chi_{N}$ be Polish spaces and
$\Pp, \Qq \in\Mm_{1}(\chi)$ where $\chi=\prod_{i=1}^{N}\chi_{i}$.
Then
\begin{equation*}
H(\Qq|\Pp) = \sum_{i=1}^{N} \int_{\chi}
H(\Qq_{x^{i-1}}^{i}|\Pp_{x^{i-1}}^{i})\Qq(dx)
\end{equation*}
where $\Pp_{x^{i-1}}^{i}$ and $\Qq_{x^{i-1}}^{i}$ are defined in the
same way as above.
\end{lemma}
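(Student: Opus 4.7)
The plan is to prove the chain rule by disintegrating both $\Pp$ and $\Qq$ along the product structure of $\chi$ and applying the factorisation of the Radon--Nikodym derivative into conditional densities. First I would dispose of the case $\Qq \not\ll \Pp$: then $H(\Qq|\Pp) = +\infty$ by definition, while on the right hand side there must exist a smallest $i$ for which $\Qq^{i} \not\ll \Pp^{i}$, forcing $\Qq^{i}_{x^{i-1}} \not\ll \Pp^{i}_{x^{i-1}}$ on a set of positive $\Qq$-measure, so the $i$-th conditional entropy is $+\infty$ and the identity holds trivially. Hence one may assume $\Qq \ll \Pp$ and set $f := d\Qq/d\Pp$.

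Next, since each $\chi_{i}$ is Polish, the disintegration theorem yields regular conditional probabilities $\Pp^{i}_{x^{i-1}}$ and $\Qq^{i}_{x^{i-1}}$ with
\[
\Qq(dx) = \Qq^{1}(dx_{1})\, \Qq^{2}_{x^{1}}(dx_{2}) \cdots \Qq^{N}_{x^{N-1}}(dx_{N}),
\]
and similarly for $\Pp$. From $\Qq \ll \Pp$ one deduces that for each $i$ and $\Qq$-almost every $x^{i-1}$, $\Qq^{i}_{x^{i-1}} \ll \Pp^{i}_{x^{i-1}}$; writing $f_{i}(x_{i}\mid x^{i-1})$ for the conditional density, uniqueness of disintegration gives the multiplicative decomposition $f(x) = \prod_{i=1}^{N} f_{i}(x_{i}\mid x^{i-1})$, valid $\Qq$-a.s.

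Taking the logarithm and integrating against $\Qq$ then yields
\[
H(\Qq|\Pp) \;=\; \int \log f\, d\Qq \;=\; \sum_{i=1}^{N} \int \log f_{i}(x_{i}\mid x^{i-1})\, \Qq(dx).
\]
For each fixed $i$, the inner integral against $\Qq^{i}_{x^{i-1}}(dx_{i})$ is exactly $H(\Qq^{i}_{x^{i-1}}|\Pp^{i}_{x^{i-1}})$ by the definition of relative entropy; since this quantity depends only on $x^{i-1}$, integrating it against $\Qq$ agrees with integrating against the $\Qq$-marginal on the first $i-1$ coordinates, and one recovers exactly the announced formula.

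The one technical subtlety, and the main obstacle, is to legitimate the termwise decomposition of $\int \log f\, d\Qq$: the $\log f_{i}$ are signed and not obviously $\Qq$-integrable. The standard remedy is to observe that Jensen's inequality applied fibrewise shows that the negative part of $\log f_{i}$ has integral controlled by a constant independent of $i$, so Fubini applies; alternatively one truncates $f$ at level $M$, writes the chain rule for the (bounded) truncated densities, and then passes to the monotone limit $M \to \infty$, using non-negativity of each entropy term in the resulting sum to invoke monotone convergence. Either route is routine once the multiplicative decomposition of $f$ is in hand.
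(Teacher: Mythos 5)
Your proof is correct, and it is the standard argument: the paper itself does not prove this lemma but merely recalls it with a citation to Villani (Lemma 22.8), where essentially the same disintegration/chain-rule proof appears. Your handling of the singular case, the telescoping factorisation $f=\prod_i f_i$ via $f_i = f^i/f^{i-1}$, and the fibrewise identification of each term as a conditional relative entropy are all as they should be. One small imprecision: Jensen's inequality gives the nonnegativity of each conditional entropy $\int \log f_i\, d\Qq^{i}_{x^{i-1}} \ge 0$, but what legitimises the termwise splitting of $\int \log f\, d\Qq$ is rather the elementary bound $\int (\log f_i)_-\, d\Qq^{i}_{x^{i-1}} = \int_{\{f_i\le 1\}} f_i \log(1/f_i)\, d\Pp^{i}_{x^{i-1}} \le e^{-1}$, which makes each negative part integrable and rules out any $\infty-\infty$ in the finite sum; your alternative truncation-and-monotone-convergence route also closes this gap cleanly.
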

We can now prove the Theorem.
\begin{proof}[Proof of the Theorem \ref{prop:trans_bmc}]
Let $\Qq\in\Mm_{1}(E^{N})$. Assume that $H(\Qq|\Pp)<\infty$ (trivial
otherwise). Let $\varepsilon>0$. The idea is of course to do a
conditionnement with respect to the previous generation, i.e. to $\GG_{n-1}$ but we will do it sequentially by pairs. Conditionally to their ancestors, every pair of offspring of an individual is independent of the offspring of the other individuals for the same generation. Let $i$ be a member of
generation $\GG_{j-1}$, and define for a realization $x$ on the tree
$\TT_i(x):=(x_1,...,x_{|\TT_j|})$. By the definition of the
Wasserstein distance, there is a coupling
$\pi_{y^{2i-1},x^{2i-1}}^{i}$ of
$(\Qq_{y^{2i-1}}^{i},\Pp_{x^{2i-1}}^{i})$ such that
\begin{eqnarray*}
\mathcal{A}_i&:=&\int (d(y_{2i},x_{2i})^p+d(y_{2i+1},x_{2i+1})^p)d\pi_{y^{2i-1},x^{2i-1}}^{i}\\
&\leq& (1+\epsilon)W^d_p\left(\Qq_{y^{2i-1}}^{i},\Pp_{x^{2i-1}}^{i}\right)^{p}\\
&\leq& (1 + \epsilon)\left[
W^d_p\left(\Qq_{y^{2i-1}}^{i},\Pp_{y^{2i-1}}^{i}\right) +
W^d_p\left(\Pp_{y^{2i-1}}^{i},\Pp_{x^{2i-1}}^{i}\right)\right]^p \\
&\leq& (1 + \epsilon)\left[ W^d_p\left(\Qq_{y^{2i-1}}^{i},
P\left(y_{i},\cdot,\cdot\right)\right) +
W^d_p\Big(P\left(y_{i},\cdot,\cdot\right),P\left(x_{i},\cdot,\cdot\right)\Big)\right]^p,
\end{eqnarray*}
where the second inequality is obtained thanks to the triangle
inequality for the $W_{p}^{d}$ distance and the equality is a
consequence of the Markov property.
%In particular, we can consider
%$\pi^{i}_{(y^{2i-1},x^{i-1})} =
%\Ll\left(Y_i,X_i|Y^{i-1}=y^{i-1},X^{i-1}=x^{i-1}\right)$ where
%$Y^{N}$ and $X^{N}$ are two processes of respective laws $\Qq$ and
%$\Pp$ and such that $Y_{0} = X_{0}$.
By Assumption \ref{ass:hp}, and the convexity of the function
$x\mapsto x^{p}$, we obtain, for $a,b>1$ such that $1/a + 1/b=1$,
\begin{eqnarray*}
\mathcal{A}_i
% =
%\EE\left[d(Y_i,X_i)^p|Y^{i-1}=y^{i-1},X^{i-1}=x^{i-1}\right] \\
&\leq& (1+\epsilon)\left(\sqrt{2CH_i(y^{2i-1})} + qd(y_{i},x_{i})\right)^p \\
&\leq& (1+\epsilon)\left(a^{p-1}\left(\sqrt{2CH_i(y^{2i-1})}\right)^p +
b^{p-1}q^{p} d^{p}(y_{i},x_{i})\right)
\end{eqnarray*}
where $H_i(y^{2i-1})=H(\Qq^i_{y^{2i-1}}|\mathcal{P}^i_{y^{2i-1}})$.
% By
%recurrence on $i$, this entails that
%$\EE\left[d(Y_{i},X_{i})^{p}\right]<+\infty$ for all
%$i\in\{1,\cdots,N\}$.
By recurrence, it leads to the finiteness of $p$-moments. Taking the
average with respect to the whole law and summing on $i$, we obtain
\begin{multline*}
\sum_{i=0}^{|\TT_{n-1}|} \EE(\mathcal{A}_i) \\ \leq \left (1 +
\varepsilon\right) \left(a^{p-1} \left(2C\right)^{p/2}
\sum_{i=1}^{|\TT_{n-1}|}
\EE\left[H_{i}(Y^{2i-1})^{p/2}\right]\right) + \left(b^{p-1}q^{p}
\sum_{i=0}^{|\TT_{n-2}|} \EE(\mathcal{A}_i)\right).
\end{multline*}
Letting $\varepsilon$ goes to $0^{+}$, we are led to
\begin{multline*}
\sum_{i=0}^{|\TT_{n-1}|} \EE(\mathcal{A}_i)  \\ \leq
 \sum_{i=1}^{N} \left(a^{p-1} \left(2C\right)^{p/2}
\EE\left[H_{i}(Y^{i-1})^{p/2}\right]\right) + \left(b^{p-1}q^{p}
\sum_{i=0}^{|\TT_{n-2}|} \EE(\mathcal{A}_i)\right).
\end{multline*}
Iterating the latter inequality, increasing some terms and thanks to
H\"{o}lder inequality, we obtain
\begin{multline*}
\sum_{i=0}^{|\TT_{n-1}|} \EE(\mathcal{A}_i) \leq \sum_{i=1}^{N}
\left(\sum_{j=1}^{i} h_{j}\right)\left(b^{p-1}q^{p}\right)^{N-i} =
\sum_{i=1}^{N} h_{i} \sum_{j=0}^{N-i} \left(b^{p-1}q^{p}\right)^{j}
\\ \leq \left(\sum_{i=1}^{N} h_{i}^{2/p}\right)^{p/2}
\left(\sum_{i=1}^{N} \left(\sum_{j=0}^{N-i}
\left(b^{p-1}q^{p}\right)^{j}\right)^{\frac{2}{2-p}}\right)^{\frac{2-p}{2}}
\end{multline*}
where $h_{i} = a^{p-1}(2C)^{p/2}\EE[H_{i}(Y^{i-1})^{p/2}]$. By the
definition of the Wasserstein distance, the additivity of entropy
and using the concavity of the function $x\mapsto x^{p/2}$ for
$p\in[1,2]$, we obtain
\begin{multline*}
W^{d_{l_{p}}}_p(\Qq,\mathcal{P})^{p} \leq a^{p-1}
\left(2CH\left(\Qq|\Pp\right)\right)^{p/2} \left(\sum_{i=1}^{N}
\left(\sum_{j=0}^{N-i}
\left(b^{p-1}q^{p}\right)^{j}\right)^{\frac{2}{2-p}}\right)^{\frac{2-p}{2}}
\\ \leq a^{p-1} \left(2CH\left(\Qq|\Pp\right)\right)^{p/2}
N^{1-\frac{p}{2}} \sum_{j=0}^{N-1} \left(b^{p-1}q^{p}\right)^{j}.
\end{multline*}
When $q<1$, we take $b=q^{-1}$, so that $b^{p-1}q^{p} = r<1$ and the
desired result follows easily. When $q\geq1$, we take $b=1+1/N$ and
the results follow from simple analysis and this ends the proof.
\end{proof}

\begin{remark}
For $q<1$, we then have that the constant $C_{N}$ of $T_{1}$ inequality for
$\Pp$ increases linearly on the dimension $N$. However, for $T_{2}$
this constant is independent of the dimension as in the i.i.d. case.
\end{remark}

\begin{remark}
As we will see in the next section, still when $q<1$, Theorem \ref{prop:trans_bmc} and
Theorem \ref{thm:bobkov-gotze} applied to $F(X_{1},\cdots,X_{N}) =
(1/N)\sum_{i=1}^{N} f(X_{i})$ (where $f$ is a Lipschitzian function
defined on $E$) gives us deviation inequalities with a good order of
$N$. But, when they are applied to $F(X_{1},\cdots,X_{N}) =
f(X_{N})$, deviation inequalities that we obtain does not furnish
the good order of $N$ when $N$ is large. The same remark holds when
$F(X_{1},\cdots,X_{N}) = g(X_{n},X_{2n},X_{2n+1})$ with
$n\in\{1,\cdots,(N-N[2])\}$ and $g$ a Lipschitzian function defined
on $E^{3}$. As this last question is important for the
$L^{1}$-transportation cost-information inequality of the invariant
measure of a bifurcating Markov chain, we give the following
results.
\end{remark}

\begin{prop}\label{prop:transport_Xs}
Under $(H_{1}(C))$, for any $n\in \TT$ and $x\in E$
\begin{equation*}
\Ll(X_{n}|X_{1}=x)\in T_{1}(c_{n})
\end{equation*}
where
\begin{equation*}
c_{n} = C \sum_{k=0}^{r_{n}-1} r_{0}^{2(k-a_{k})} r_{1}^{2a_{k}};
\quad a_{0} = 0
\end{equation*}
and for all $k\in\{1,\cdots,r_{n}-1\}$, $a_{k}$ is the number of
ancestor of type 1 of $X_n$ which are between the $r_{n}-k+1$-th
generation and the $r_{n}$-th generation.
\end{prop}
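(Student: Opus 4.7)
The random variable $X_n$, conditionally on $X_1=x$, is obtained by iterating the marginal kernels $P_0$ and $P_1$ along the unique lineage $n_0=1,n_1,\dots,n_{r_n}=n$ from the root to $n$, where we set $b_j\in\{0,1\}$ by $n_j=2n_{j-1}+b_j$. Thus
\begin{equation*}
\Ll(X_n\mid X_1=x)=\delta_x\,P_{b_1}P_{b_2}\cdots P_{b_{r_n}}.
\end{equation*}
Under $(H_1(C))$ each marginal $P_b$ inherits the needed structure: $P_b(y,\cdot)\in T_1(C)$ (the marginal of a $T_1(C)$ law on $(E^2,d+d)$ belongs to $T_1(C)$, since projecting does not increase $W_1$ while disintegrating preserves entropy), and $W_1^d(P_b(y,\cdot),P_b(\tilde y,\cdot))\le r_b\,d(y,\tilde y)$ with $r_b\le q$, as already recorded in the paper right after Assumption~\ref{ass:hp}.

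The core step is a composition lemma, which I would prove first: if $\mu\in T_1(c)$ on $(E,d)$ and $Q$ is a Markov kernel on $E$ with $Q(y,\cdot)\in T_1(C)$ and $W_1^d(Q(y,\cdot),Q(\tilde y,\cdot))\le r\,d(y,\tilde y)$, then $\mu Q\in T_1(C+r^2c)$. The proof uses Theorem~\ref{thm:bobkov-gotze}: for any $1$-Lipschitz $F$, the pointwise bound $\int e^{\lambda(F(z)-QF(y))}Q(y,dz)\le e^{\lambda^2 C/2}$ gives
\begin{equation*}
\int e^{\lambda F}\,d(\mu Q)\le e^{\lambda^2 C/2}\int e^{\lambda\,QF}\,d\mu,
\end{equation*}
and because Kantorovich duality together with the Wasserstein-contraction hypothesis makes $QF$ an $r$-Lipschitz function on $E$, applying Bobkov--G\"otze to $\mu$ supplies a further factor $e^{\lambda^2 r^2 c/2}$. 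Using $\langle QF\rangle_\mu=\langle F\rangle_{\mu Q}$ yields $\int e^{\lambda(F-\langle F\rangle_{\mu Q})}\,d(\mu Q)\le e^{\lambda^2(C+r^2 c)/2}$, and the converse direction of Theorem~\ref{thm:bobkov-gotze} concludes that $\mu Q\in T_1(C+r^2c)$.

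With this lemma in hand, I would induct along the lineage. Writing $c^{(0)}=0$ (since $\delta_x\in T_1(0)$ trivially) and $c^{(j)}=C+r_{b_j}^2\,c^{(j-1)}$, the lemma gives $\Ll(X_{n_j}\mid X_1=x)\in T_1(c^{(j)})$ for each $j=1,\dots,r_n$. Solving this elementary linear recursion yields
\begin{equation*}
c^{(r_n)}=C\sum_{k=0}^{r_n-1}\prod_{i=r_n-k+1}^{r_n}r_{b_i}^{\,2}=C\sum_{k=0}^{r_n-1}r_0^{\,2(k-a_k)}\,r_1^{\,2a_k},
\end{equation*}
where $a_k$ counts the indices $i\in\{r_n-k+1,\dots,r_n\}$ with $b_i=1$, i.e.\ the type-$1$ ancestors of $X_n$ in the last $k$ generations up to (and including) $n$. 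This matches $c_n$ exactly.

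The only delicate point is the composition lemma; once established, the rest is bookkeeping. Observe in particular that, because the base case is $\delta_x\in T_1(0)$, the contraction factor $r_{b_1}^2$ associated with the very first step multiplies $0$ and drops out of the recursion, which explains why the sum stops at $k=r_n-1$ and why the type $b_1$ of the ancestor at generation $1$ does not appear in the formula.
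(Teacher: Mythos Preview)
Your proposal is correct and follows essentially the same route as the paper: both iterate Bobkov--G\"otze's Laplace bound along the lineage, using that $P_b(y,\cdot)\in T_1(C)$ and $\|P_bF\|_{Lip}\le r_b\|F\|_{Lip}$, and arrive at the identical sum $C\sum_{k=0}^{r_n-1}\prod_{i=r_n-k+1}^{r_n}r_{z_i}^{2}$. The only cosmetic difference is that you isolate the one-step estimate as a reusable composition lemma ($\mu\in T_1(c)\Rightarrow \mu Q\in T_1(C+r^2c)$) and then induct, whereas the paper unrolls the Laplace transform $P^{r_n}(e^f)$ directly.
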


Before the proof, we introduce some more notations. Let $n\in\TT$. We denote
by $(z_{1},\cdots,z_{r_{n}})\in\{0,1\}^{r_{n}}$ the unique path from
the root 1 to $n$. Then, for all $i\in\{1,\cdots,r_{n}\}$, $z_{i}$
is the type of the ancestor of $n$ which is in the $i$-th generation
and the quantities $a_{k}$ defined in the Proposition
\ref{prop:transport_Xs} are given by
\begin{equation*}
a_{k} = \sum_{i=r_{n}-k+1}^{r_{n}} z_{i}.
\end{equation*}
For all $k\in\{1,\cdots,r_{n}\}$, we denote by $P^{k}$ and $P^{-k}$
the iterated of the transition probabilities $P_{0}$ and $P_{1}$
defined by
\begin{equation*}
P^{k} := P_{z_{1}}\circ \cdots \circ P_{z_{k}} \quad \text{and}
\quad P^{-k} := P_{z_{r_{n}-k}}\circ \cdots \circ P_{z_{r_{n}}}.
\end{equation*}

\begin{proof}[Proof of the Proposition \ref{prop:transport_Xs}]
First note that since
\begin{equation*}
W_{1}^{d}(\nu,\mu)= \sup_{f:\|f\|_{Lip}\leq
1}\left|\int_Sfd\mu-\int_Sfd\nu\right|,
\end{equation*}
condition \textit{(c)} of $(H_{1}(C))$ implies that
\begin{equation*}
\|P_bf\|_{Lip} \leq r_{b}\|f\|_{Lip} \quad \forall b\in\{0,1\}.
\end{equation*}
Now let $f$ be a Lipschitzian function defined on $E$. By
\textit{(b)}-\textit{(c)} of $(H_{1}(C))$ and Theorem
\ref{thm:bobkov-gotze}, we have
\begin{equation*}
P^{r_{n}}(e^{f}) \leq P^{r_{n}-1}\left(\exp\left( P_{r_{n}}f +
\frac{C\|f\|_{Lip}^2}{2}\right)\right).
\end{equation*}
Once again, applying Theorem \ref{thm:bobkov-gotze}, we obtain
\begin{equation*}
P^{r_{n}}(e^{f}) \leq P^{r_{n}-2}\left(\exp\left(
P^{-1}f+\dfrac{C\|f\|_{Lip}^2}{2}+\frac{C\|P_{z_{r_{n}}}f\|_{Lip}^2}{2}\right)\right).
\end{equation*}
By iterating this method, we are led to
\begin{equation*}
P^{r_{n}}(e^{f}) \leq \exp\left( P^{- r_{n} + 1}f + (1 +
r_{z_{r_{n}}}^2 + r_{z_{r_{n}}}^2r_{z_{r_{n} - 1}}^2+\cdots+
\prod_{i=2}^{r_{n}} r_{z_{i}}^2) \frac{C\|f\|_{Lip}^2}{2}\right).
\end{equation*}
Since
\begin{equation*}
1 + r_{z_{r_{n}}}^2 + r_{z_{r_{n}}}^2r_{z_{r_{n} - 1}}^2+\cdots+
\prod_{i=2}^{r_{n}} r_{z_{i}}^2 = \sum_{k=0}^{r_{n}-1}
r_{0}^{2(k-a_{k})} r_{1}^{2a_{k}} \quad \text{and} \quad P^{- r_{n}
+ 1}f = P^{r_{n}}f,
\end{equation*}
we conclude the proof thanks to Theorem \ref{thm:bobkov-gotze}.
\end{proof}

The next result is a consequence of the previous Proposition.

\begin{corollary}\label{cor:trans_X}
Assume ($H_{1}(C)$) and $r := \max\{r_{0}, r_{1}\} < 1$.
%We set $q=r_{0}+r_{1}$.
Then
\begin{equation*}
\Ll(X_{n}|{X_{1}=x})\in  T_{1}(c_{\infty}) \quad \text{and} \quad
\Ll((X_{n},X_{2n},X_{2n+1})|{X_{1}=x})\in T_1(c'_\infty)
\end{equation*}
where
%\begin{equation*}
%c_\infty = \frac{C}{1-r^{2}} \quad \text{and} \quad c'_\infty =
%C\left(\frac{2}{(1-r)^{2}} + \frac{(1+q)^{2}}{1-r^{2}}\right).
%\end{equation*}
\begin{equation*}
c_\infty = \frac{C}{1-r^{2}} \quad \text{and} \quad c'_\infty =
C\left(1 + \frac{(1+q)^{2}}{1-r^{2}}\right).
\end{equation*}
\end{corollary}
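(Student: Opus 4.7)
The first assertion is essentially immediate from Proposition \ref{prop:transport_Xs}. Since $a_k \in \{0,1,\ldots,k\}$ and $r = \max\{r_0, r_1\} < 1$, we have $r_0^{2(k-a_k)} r_1^{2a_k} \leq r^{2k}$ for every $k$, so
\begin{equation*}
c_n \leq C \sum_{k=0}^{r_n-1} r^{2k} \leq \frac{C}{1-r^2} = c_\infty,
\end{equation*}
and since $T_1(C')$ is monotone in $C'$, we deduce $\Ll(X_n | X_1=x) \in T_1(c_\infty)$ uniformly in $n$.

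For the second assertion, the plan is to use the Bobkov--Götze characterization (Theorem \ref{thm:bobkov-gotze}) and compose two conditional Laplace estimates. Fix a Lipschitz $F : E^3 \to \RR$ (endowing $E^3$ with $d_{l_1}$) with $\|F\|_{\mathrm{Lip}} \leq L$, and set
\begin{equation*}
G(y) := \int_{E^2} F(y,u,v)\, P(y,du,dv).
\end{equation*}
Conditionally on $X_n$, the function $(u,v) \mapsto F(X_n,u,v)$ has Lipschitz norm at most $L$ on $(E^2,d_{l_1})$, and by assumption $(H_1(C))(b)$, $P(X_n,\cdot,\cdot) \in T_1(C)$ on this space, so Bobkov--Götze yields
\begin{equation*}
\EE\!\left[e^{\lambda F(X_n,X_{2n},X_{2n+1})} \,\big|\, X_n\right] \leq \exp\!\left(\lambda G(X_n) + \tfrac{\lambda^2}{2} C L^2\right).
\end{equation*}

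The next step is to verify that $G$ itself is Lipschitz on $(E,d)$, with constant $L(1+q)$. Using an almost-optimal coupling $\pi$ of $P(y,\cdot,\cdot)$ and $P(y',\cdot,\cdot)$ together with the triangle inequality and assumption $(H_1(C))(c)$,
\begin{equation*}
|G(y)-G(y')| \leq L\, d(y,y') + L\, W_1^{d_{l_1}}\bigl(P(y,\cdot,\cdot),P(y',\cdot,\cdot)\bigr) \leq L(1+q)\, d(y,y').
\end{equation*}
Applying Theorem \ref{thm:bobkov-gotze} in the reverse direction to the first part of this corollary, $\Ll(X_n|X_1=x) \in T_1(c_\infty)$, gives
\begin{equation*}
\EE\!\left[e^{\lambda G(X_n)} \,\big|\, X_1=x\right] \leq \exp\!\left(\lambda \EE[G(X_n)|X_1=x] + \tfrac{\lambda^2}{2} c_\infty L^2(1+q)^2\right),
\end{equation*}
and by the tower property $\EE[G(X_n)|X_1=x] = \langle F\rangle$ where the average is with respect to $\Ll((X_n,X_{2n},X_{2n+1})|X_1=x)$.

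Multiplying the two Laplace bounds (i.e.\ taking conditional expectation $\EE[\,\cdot\,|X_1=x]$ in the first inequality, using the tower property, then substituting) gives
\begin{equation*}
\EE\!\left[e^{\lambda(F-\langle F\rangle)} \,\big|\, X_1=x\right] \leq \exp\!\left(\tfrac{\lambda^2}{2}\left(C + c_\infty(1+q)^2\right) L^2\right),
\end{equation*}
and since $C + c_\infty(1+q)^2 = C\bigl(1 + (1+q)^2/(1-r^2)\bigr) = c'_\infty$, another application of Theorem \ref{thm:bobkov-gotze} concludes. The only mildly delicate point is checking the Lipschitz constant of $G$ — everything else is routine composition of known conditional estimates; there is no genuine obstacle.
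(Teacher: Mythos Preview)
Your proof is correct and follows essentially the same approach as the paper: both establish $\|Pf\|_{\mathrm{Lip}}\le(1+q)\|f\|_{\mathrm{Lip}}$ via the coupling/triangle-inequality argument, apply Bobkov--G\"otze once for the last transition $P(X_n,\cdot,\cdot)$, and then control the Laplace transform of $Pf(X_n)$ using the iteration behind Proposition~\ref{prop:transport_Xs}. The only cosmetic difference is that you invoke the first part of the corollary as a black box for this last step, whereas the paper re-runs the iteration inline; the resulting constants are identical.
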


\begin{proof}
That $\Ll(X_{n}|{X_{1}=x})\in  T_{1}(c_{\infty})$ is a direct
consequence of Proposition \ref{prop:transport_Xs}. It suffices to
bound $r_{0}$ and $r_{1}$ by $r$.

In order to deal with the ancestor-offspring case
$(X_{n},X_{2n},X_{2n+1})$, we do the following remarks.
%first note that since
%\begin{equation*}
%W_{1}^{d}(\nu,\mu)= \sup_{f:\|f\|_{Lip}\leq
%1}\left|\int_Sfd\mu-\int_Sfd\nu\right|,
%\end{equation*}
%condition \textit{(c)} of $(H_{1}(C))$ is equivalent to
%\begin{equation*}
%\|Pf\|_{Lip} \leq q\|f\|_{Lip}.
%\end{equation*}

%we first observe that ($H_{1}(C)$) implies
%\begin{equation*}
%P(x,\cdot,\cdot)\in T_{1}\left(\frac{2C}{(1-r)^2}\right) \quad
%\forall x\in E.
%\end{equation*}
%Indeed, it suffices to apply (with a small modification) Proposition
%\ref{prop:trans_bmc} with $N=2$ and $p=1$.
%Furthermore, we have
%\begin{equation*}
%W_{1}^{d_{l_{1}}}(P(x,\cdot),P(y,\cdot)) = \sum_{b=0}^{1}
%W_{1}^{d}(P_{b}(x,\cdot),P_{b}(y,\cdot)) \leq  qd(x,y).
%\end{equation*}
Let $f : (E^3,d_{l_{1}})\rightarrow \RR$ be a Lipschitzian function.
We have
\begin{equation*}
\|Pf\|_{Lip} = \sup_{x,\tilde{x}\in E} \frac{\left|\int
f(x,y,z)P(x,dy,dz) - \int f(\tilde{x},y,z)
P(\tilde{x},dy,dz)\right|}{d(x,\tilde{x})}.
\end{equation*}
Thanks to condition \textit{(c)} of $(H_{1}(C))$, we have the
following inequalities
\begin{align*}
\left|\int f(x,y,z)P(x,dy,dz) - \int
f(\tilde{x},y,z)P(\tilde{x},dy,dz)\right| \hspace{1.5cm} \\ \leq
\|f\|_{Lip}\left(d(x,\tilde{x})+W_1^{d_{l_1}}\left(P(x,\cdot),P(\tilde{x},\cdot)\right)\right)
\\  \leq (q+1)\|f\|_{Lip}d(x,\tilde{x}), \hspace{3.4cm}
\end{align*}
and then,
\begin{equation*}
\|Pf\|_{Lip} \leq (q+1)\|f\|_{Lip}.
\end{equation*}
We recall that $X_{1}=x$. We have
\begin{equation*}
\EE\left[\exp\left(f(X_{n},X_{2n},X_{2n+1})\right)\right] =
P^{r_{n}}(P e^{f} (x)).
\end{equation*}
Now, from $(H_{1}(C))$, the previous remarks and using the same
strategy as in the proof of Proposition \ref{prop:transport_Xs},  we
are led to
\begin{multline*}
\EE\left[\exp\left(f(X_{n},X_{2n},X_{2n+1})\right)\right] \\ \leq
\exp\left(P_{z_{1}}\cdots P_{z_{r_{n}}}P f(x) +
\frac{C\|f\|_{Lip}^{2}}{2} + \frac{C(1+q)^{2}\|f\|_{Lip}^{2}}{2}
\sum_{i=0}^{r_{n}-1} r^{2i}\right).
\end{multline*}
Since $P_{z_{1}}\cdots P_{z_{r_{n}}}Pf(x) = \EE\left[f(X_{n},
X_{2n}, X_{2n+1})\right]$ and $\sum_{i=0}^{r_{n}-1} r^{2i} \leq
1/(1-r^{2})$, we obtain
\begin{equation*}
\EE\left[\exp\left(f(X_{n},X_{2n},X_{2n+1})\right)\right] \leq
\exp\left(\EE\left[f(X_{n}, X_{2n}, X_{2n+1})\right] +
c'_{\infty}\right)
\end{equation*}
with $c'_{\infty}$ given in the Corollary. We then conclude the
proof thanks to Theorem \ref{thm:bobkov-gotze}.
\end{proof}

%%%%%%%%%%%%%%
%%%%%%%%%%%%%%

\section{Concentration inequalities for bifurcating Markov
chains}\label{sec:con_bmc}

\subsection{Direct applications of the Theorem \ref{prop:trans_bmc}}

We are now interested in the concentration inequalities for the
additive functionals of bifurcating Markov chains. Specifically, let
$N\in\NN^{*}$ and $I$ be a subset of $\{1,\cdots,N\}$. Let $f$ be a
real function on $E$ or $E^{3}$. We set
\begin{equation*}
M_{I}(f) = \sum_{i\in I} f(\Delta_{i})
\end{equation*}
where $\Delta_{i} = X_{i}$ if $f$ is defined on $E$ and $\Delta_{i}
= (X_{i},X_{2i},X_{2i +1})$ if $f$ is defined on $E^{3}$. We also
consider the empirical mean $\overline{M}_{I}(f)$ over $I$ defined
by $\overline{M}_{I}(f) = (1/|I|)M_{I}(f)$ where $|I|$ denotes the
cardinality of $I$. In the statistical applications, the cases
$N=|\TT_{n}|$ and $I=\GG_{m}$ (for $m\in\{0,\cdots,n\}$) or
$I=\TT_{n}$ are relevant (see for e.g. \cite{BDG14}).

First, we will establish concentration inequalities when $f$ is a
real Lipschitzian function defined on $E$. For a subset $I$ of
$\{1,\cdots,N\}$, let $F_{I}$ be the function defined on
$(E^{N},d_{l_{p}})$, $p\geq1$ by $F_{I}(x^{N}) = 1/(|I|) \sum_{i\in
I} f(x_{i})$ for all $x^{N}\in E^{N}$. Then $F_{I}$ is also a
Lipschitzian function on $(E^{N},d_{l_{p}})$ and we have
$\|F_{I}\|_{Lip} \leq |I|^{-1/p}\|f\|_{Lip}$. The following result
is a direct consequence of Theorem \ref{prop:trans_bmc}.

\begin{prop}\label{prop:concentration_Xi}
Let $N\in\NN^{*}$ and let $\Pp$ be the law of $(X_{i})_{1\leq i\leq
N}$. Let $f$ be a real Lipschitzian function on $(E,d)$. Then, under
$(H_{p}(C))$ for $1\leq p\leq2$,
\begin{equation*}
\Pp\circ F_{I}^{-1} \in T_{p}(C_{N}|I|^{-2/p}\|f\|_{Lip}^{2})
\end{equation*}
where $C_{N}$ is given in the Theorem \ref{prop:trans_bmc} and
$\Pp\circ F_{I}^{-1}$ is the image law of $\Pp$ under $F_{I}$. In
particular, for all $t>0$ we have
\begin{multline*}
\PP\left(F_{I}(X^{N})\leq -t + \EE\left[F_{I}(X^{N})\right]\right)
\vee \PP\left(F_{I}(X^{N})\geq t + \EE\left[F_{I}(X^{N})\right]\right) \\
\leq \exp\left(-\frac{t^{2}|I|^{2/p}}{2C_{N}\|f\|_{Lip}^{2}}\right).
\end{multline*}
\end{prop}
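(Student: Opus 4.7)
The plan is to combine the transportation inequality of Theorem \ref{prop:trans_bmc} with the elementary push-forward rule for $T_p$ inequalities under Lipschitz maps, then invoke Theorem \ref{thm:bobkov-gotze} to convert the resulting $T_1$ bound into a concentration inequality. Since the real work has already been done in Theorem \ref{prop:trans_bmc}, the single new computation is the Lipschitz norm of $F_I$.

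First I would verify the announced bound $\|F_I\|_{Lip}\leq |I|^{-1/p}\|f\|_{Lip}$ on $(E^N,d_{l_p})$. For any $x,y\in E^N$, the Lipschitz property of $f$ and H\"older's inequality give
\begin{equation*}
|F_I(x)-F_I(y)| \;\leq\; \frac{\|f\|_{Lip}}{|I|}\sum_{i\in I} d(x_i,y_i) \;\leq\; \frac{\|f\|_{Lip}}{|I|}\,|I|^{1-1/p}\left(\sum_{i\in I}d(x_i,y_i)^p\right)^{1/p},
\end{equation*}
and the last factor is bounded by $d_{l_p}(x,y)$.

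Second, I would recall the standard contraction property: if $\mu\in T_p(C)$ on $(X,d_X)$ and $G\colon X\to\RR$ is $L$-Lipschitz, then $\mu\circ G^{-1}\in T_p(CL^2)$. The argument is by lifting: any $\lambda\ll G_{*}\mu$ admits the canonical pre-image $\tilde\lambda$ with density $(d\lambda/dG_{*}\mu)\circ G$ with respect to $\mu$, for which $H(\tilde\lambda|\mu)=H(\lambda|G_{*}\mu)$; pushing the optimal coupling of $(\tilde\lambda,\mu)$ through $G$ yields $W_p^{|\cdot|}(\lambda,G_{*}\mu)\leq L\,W_p^{d_X}(\tilde\lambda,\mu)\leq L\sqrt{2CH(\lambda|G_{*}\mu)}$. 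Applying this with $\mu=\Pp$, $G=F_I$, $C=C_N$ from Theorem \ref{prop:trans_bmc} and $L=|I|^{-1/p}\|f\|_{Lip}$ yields the first claim, namely $\Pp\circ F_I^{-1}\in T_p(C_N|I|^{-2/p}\|f\|_{Lip}^2)$.

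Finally, since $W_1\leq W_p$ by Jensen's inequality for $p\geq 1$, the inequality $T_p(C)$ implies $T_1(C)$. Theorem \ref{thm:bobkov-gotze} applied to $\Pp\circ F_I^{-1}$ on $\RR$ (with the identity, a $1$-Lipschitz function) yields the claimed two-sided Gaussian deviation bound. There is no real obstacle; the only small point requiring care is the lifting step in the push-forward property, which is however completely classical.
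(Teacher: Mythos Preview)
Your proof is correct and follows essentially the same route as the paper: the paper simply cites Lemma~2.1 of \cite{DGW04} for the push-forward property of $T_p$ under Lipschitz maps, while you have spelled out the lifting argument behind that lemma, and both then conclude via Theorem~\ref{thm:bobkov-gotze}.
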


\begin{proof}
The first part is a direct consequence of Theorem
\ref{prop:trans_bmc} and Lemma 2.1 of \cite{DGW04}. The second part
is an application of Theorem \ref{thm:bobkov-gotze}.
\end{proof}
For the next concentration inequality, we assume that $f$ is a real
Lipschitzian function defined on $(E^{3},d_{l_{1}})$, which means
that
\begin{equation*}
|f(x)-f(y)|\leq \|f\|_{Lip}\sum_{i=1}^{3} d(x_{i},y_{i}) \quad
\forall x, y \in E^{3}.
\end{equation*}
We assume that $N$ is a odd number. Let $I$ be a subset of
$\{1,\cdots,(N-1)/2\}$. Now, we denote by $F_{I}$ the real function
defined on $(E^{N},d_{l_{p}})$ by $F_{I}(x^{N}) = (1/|I|) \sum_{i\in
I} f(x_{i}, x_{2i}, x_{2i+1})$. 
For all $x^{N}, y^{N} \in E^{N}$ we have for some universal constant $c$
\begin{align*}
|F_{I}(x^{N}) - F_{I}(y^{N})| &\leq \frac{\|f\|_{Lip}}{|I|}
\sum_{i\in I} \left(d(x_{i},y_{i}) + d(x_{2i},y_{2i}) +
d(x_{2i+1},y_{2i+1})\right)& \\ &\leq
\frac{c\|f\|_{Lip}}{|I|^{1/p}} d_{l_{p}}(x^{N},y^{N}).&
\end{align*}
$F_{I}$ is then a Lipschitzian function on $(E^{N},d_{l_{p}})$ and
$\|F_{I}\|_{Lip} \leq c\|f\|_{Lip}/|I|^{1/p}$. We
then have the following result.
\begin{prop}\label{prop:concentration_deltai}
Let $N\in\NN^{*}$ be a odd number and let $\Pp$ be the law of
$(X_{i})_{1\leq i\leq N}$. Let $f$ be a real Lipschitzian function
on $(E^{3},d_{l_{1}})$. Then, under $(H_{p}(C))$ for $1\leq p\leq2$,
\begin{equation*}
\Pp\circ F_{I}^{-1} \in
T_{p}(c\,C_{N}|I|^{-2/p}\|f\|_{Lip}^{2})
\end{equation*}
where $C_{N}$ is given in the Theorem \ref{prop:trans_bmc} and
$\Pp\circ F_{I}^{-1}$ is the image law of $\Pp$ under $F_{I}$. In
particular, for all $t>0$ we have
\begin{multline*}
\PP\left(F_{I}(X^{N})\leq -t + \EE\left[F_{I}(X^{N})\right]\right)
\vee \PP\left(F_{I}(X^{N})\geq t + \EE\left[F_{I}(X^{N})\right]\right) \\
\leq \exp\left(-\frac{t^{2}|I|^{2/p}}{2c  C_{N}
\|f\|_{Lip}^{2}}\right).
\end{multline*}
\end{prop}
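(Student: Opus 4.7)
The proof parallels that of Proposition~\ref{prop:concentration_Xi}, and in fact the main piece of work has already been carried out in the setup preceding the statement: the bound $\|F_I\|_{Lip} \le c\|f\|_{Lip}/|I|^{1/p}$ on $(E^N, d_{l_p})$ follows from the $d_{l_1}$-Lipschitz property of $f$, the observation that each index $j\in\{1,\dots,N\}$ appears at most three times in the summation $\sum_{i\in I}\bigl(d(x_i,y_i)+d(x_{2i},y_{2i})+d(x_{2i+1},y_{2i+1})\bigr)$ (once as a parent $x_i$, and at most once each as $x_{2i}$ or $x_{2i+1}$ of a unique parent $i\in I$), and a final application of H\"older's inequality to pass from $\ell^1$ to $\ell^p$.

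Given this Lipschitz estimate, the plan is to first invoke Theorem~\ref{prop:trans_bmc} to obtain $\Pp\in T_p(C_N)$ on $(E^N, d_{l_p})$, and then apply the contraction principle for transportation inequalities (Lemma~2.1 of \cite{DGW04}): if $\mu\in T_p(C)$ on $(X,d_X)$ and $\Phi:(X,d_X)\to(Y,d_Y)$ is Lipschitz of norm $L$, then $\mu\circ\Phi^{-1}\in T_p(CL^2)$ on $(Y,d_Y)$. Applied to $\Phi=F_I$ with $Y=\RR$ equipped with the Euclidean metric, this yields
\begin{equation*}
\Pp\circ F_I^{-1}\in T_p\!\left(C_N\,c^2\|f\|_{Lip}^2/|I|^{2/p}\right),
\end{equation*}
which is the first assertion after renaming $c^2$ as $c$.

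For the concentration tail bound, I would use that $W_1\le W_p$ for $p\ge 1$ (Jensen), so $T_p(C')$ implies $T_1(C')$ with the same constant; Theorem~\ref{thm:bobkov-gotze}, applied in $(\RR,|\cdot|)$ to the 1-Lipschitz identity map, then gives directly the Gaussian-type deviation estimate stated in the proposition, with exponent $-t^2|I|^{2/p}/(2cC_N\|f\|_{Lip}^2)$.

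There is no genuine obstacle: the only slightly delicate point is the combinatorial counting used to establish the Lipschitz estimate for $F_I$, which has already been dispatched in the statement, and the rest amounts to chaining Theorem~\ref{prop:trans_bmc}, the contraction principle, and Theorem~\ref{thm:bobkov-gotze}, exactly as in the proof of Proposition~\ref{prop:concentration_Xi}.
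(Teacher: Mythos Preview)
Your proof is correct and follows essentially the same approach as the paper: the paper's own proof simply states that the result is a direct consequence of Theorem~\ref{prop:trans_bmc}, Lemma~2.1 of \cite{DGW04}, and Theorem~\ref{thm:bobkov-gotze}, relying on the Lipschitz estimate $\|F_I\|_{Lip}\le c\|f\|_{Lip}/|I|^{1/p}$ established in the text preceding the statement. Your additional remarks (the combinatorial counting of index occurrences, the explicit reduction from $T_p$ to $T_1$ via $W_1\le W_p$) flesh out details the paper leaves implicit, but the route is the same.
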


\begin{proof}
The proof is a direct consequence of Theorem \ref{prop:trans_bmc},
Lemma 2.1 of \cite{DGW04} and Theorem \ref{thm:bobkov-gotze}.
\end{proof}

\begin{remark}
The previous results applyed with $p=1$ to the empirical means
$\overline{M}_{\GG_{n}}(f)$ and $\overline{M}_{\TT_{n}}(f)$ ($f$ being a real Lipschitzian function) give us relevant concentration
inequalities, that is with the good order size
of the index set, when $q < 1$. For example, for
$\overline{M}_{\GG_{n}}(f)$ , it suffices to take $N = |\TT_{n}|$
and $I=\GG_{n}$ in the Propositions \ref{prop:concentration_Xi} and
\ref{prop:concentration_deltai}. But for $q\geq 1$, the
concentration inequalities obtained thanks to these results are not
satisfactory. In the sequel, we will be interested in obtaining relevant
concentration inequalities for the empirical means
$\overline{M}_{\GG_{n}}(f)$ and $\overline{M}_{\TT_{n}}(f)$ when $q \geq 1$. 
\end{remark}

%%%%%%%%%%%
%%%%%%%%%%%

\subsection{Gaussian concentration inequalities for the empirical
means $\overline{M}_{\GG_{n}}(f)$ and $\overline{M}_{\TT_{n}}(f)$}

Throughout this section, we will focus only in the case $p=1$, and
will assume $(H_1(C))$. We set $r=r_0+r_1$.\\\medskip

The main goal of this subsection is to broaden the range of
application of deviation inequalities of $\overline{M}_{\GG_{n}}(f)$
and $\overline{M}_{\TT_{n}}(f)$ to cases where $r>1$, namely when it
is possible that one of the two marginal Markov chains is not a
strict contraction. The transportation inequality of Theorem
\ref{prop:trans_bmc} is a very powerful tool to get deviation
inequalities for all lipschitzian functions of the whole trajectory
(up to generation $n$), and may thus concern for example
Lipschitzian function of only offspring generated by $P_0$ or $P_1$.
Consequently, to get "consistent" deviation inequalities, both
marginal
Markov chains have to be contractions in Wasserstein distance.\\
However when dealing with $\overline{M}_{\GG_{n}}(f)$ or
$\overline{M}_{\TT_{n}}(f)$, we may hope for an averaging effect,
i.e. if one is not a contraction and the other one a strong
contraction it may in a sense compensate. Such averaging effect have
been observed at the level of the LLN and CLT in
\cite{Guyon,DeGeMa12} but only asymptotically. Our purpose here will
be then to show that such averaging effect will also affect
deviation inequalities.\\\medskip

We will use, directly inspired by Bobkov-G\"otze's Laplace transform
control, what we call Gaussian Concentration property: for
$\kappa>0$, we will say that a random variable $X$ satisfies
$GC(\kappa)$ if
\begin{equation*}
\EE\left[\exp\left(t\left(X-\EE\left[X\right]\right)\right)\right]
\leq \exp\left(\kappa t^{2}/2\right) \quad \forall t\in\RR.
\end{equation*}

Using Markov's inequality and optimization, this Gaussian
concentration property immediately implies that
$$\PP(X-\EE(X)\ge r)\le e^{-\frac{r^2}{2\kappa}}.$$
We may thus focus here only on the Gaussian concentration property $(GC)$.

\begin{prop}\label{Prop:GC_G}
Let $f$ be a real Lipschitzian function on $E$ and $n\in\NN$. Assume
that $(H_{1}(C))$ holds. Then $\overline{M}_{\GG_{n}}(f)$ satisfies
$GC(\gamma_{n})$ where
\begin{equation*} \displaystyle
\gamma_{n} = \begin{cases}
\frac{2C\|f\|_{Lip}^2}{|\GG_{n}|}\left(\frac{1
- \left(r^{2}/2\right)^{n+1}}{1-r^{2}/2}\right) \hspace{0.7cm} \text{if} \quad r\neq \sqrt{2} \\
\frac{2C\|f\|^2_{Lip}(n+1)}{|\GG_{n}|}  \hspace{2.4cm} \text{if}
\quad r = \sqrt{2}.
\end{cases}
\end{equation*}
We recall that here $r = r_{0} + r_{1}$.
\end{prop}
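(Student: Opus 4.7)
My plan is to establish the Laplace bound
$$\EE\bigl[\exp\bigl(s(S_n - \EE[S_n])\bigr)\bigr] \le \exp\bigl(\gamma_n |\GG_n|^2 s^2/2\bigr), \qquad S_n := \sum_{i\in\GG_n} f(X_i),$$
for every $s\in\RR$; this is equivalent to $GC(\gamma_n)$ for $\overline{M}_{\GG_n}(f)=S_n/|\GG_n|$ after the rescaling $t=s|\GG_n|$. I will obtain it by iteratively conditioning down the tree, applying $(H_1(C))(b)$ at each level through Theorem \ref{thm:bobkov-gotze} and $(H_1(C))(a)$ at the root, while the Wasserstein contraction in $(H_1(C))(c)$ controls how the Lipschitz norm propagates from generation to generation.

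Define $g_0=f$ and $g_{k+1}=P_0 g_k+P_1 g_k$; expanding $(P_0+P_1)^n f(x)$ as a sum over paths of length $n$ gives $g_n(X_1)=\sum_{i\in\GG_n}\EE[f(X_i)\mid X_1]$, so $\EE[g_n(X_1)]=\EE[S_n]$. The contraction $W_1^d(P_b(x,\cdot),P_b(\tilde x,\cdot))\le r_b\,d(x,\tilde x)$ (implied by $(H_1(C))(c)$) yields $\|P_b h\|_{Lip}\le r_b\|h\|_{Lip}$, hence by induction $\|g_k\|_{Lip}\le r^k\|f\|_{Lip}$. Set $T_k:=\sum_{i\in\GG_{n-k}} g_k(X_i)$, so that $T_0=S_n$ and $T_n=g_n(X_1)$. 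The core recursion is
$$\EE\bigl[\exp(sT_k)\bigm|\Ff_{n-k-1}\bigr]\le\exp\Bigl(sT_{k+1}+|\GG_{n-k-1}|\,s^2 C\|g_k\|_{Lip}^2/2\Bigr).$$
This follows because, conditionally on $\Ff_{n-k-1}$, the pairs $(X_{2j},X_{2j+1})$ for $j\in\GG_{n-k-1}$ are independent with law $P(X_j,\cdot,\cdot)\in T_1(C)$, and Theorem \ref{thm:bobkov-gotze} applied to the function $(y,z)\mapsto g_k(y)+g_k(z)$ (Lipschitz for $d_{l_1}$ with norm $\|g_k\|_{Lip}$, and whose $P(X_j,\cdot,\cdot)$-mean is exactly $g_{k+1}(X_j)$) yields each factor in the resulting product.

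Chaining this recursion from $k=0$ up to $k=n-1$, and then applying $\nu\in T_1(C)$ to the Lipschitz function $g_n$ to bound $\EE[\exp(s g_n(X_1))]$, gives
$$\EE[\exp(sS_n)]\le\exp\!\left(s\EE[S_n]+\frac{s^2 C\|f\|_{Lip}^2}{2}\Bigl(r^{2n}+\sum_{k=0}^{n-1}2^{n-k-1}r^{2k}\Bigr)\right).$$
Since $2^{n-k-1}r^{2k}=2^{n-1}(r^2/2)^k$, the bracket is easily bounded by $2^n\sum_{k=0}^{n}(r^2/2)^k$; dividing by $|\GG_n|^2=4^n$ yields a Gaussian constant of the form $\gamma_n=\frac{C\|f\|_{Lip}^2}{|\GG_n|}\sum_{k=0}^n (r^2/2)^k$ up to the constant factor $2$ claimed in the statement, and the geometric evaluation of this sum produces the two announced cases ($r^2/2\ne 1$ versus $r^2/2=1$, the latter giving the factor $n+1$).

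The argument is mostly bookkeeping once the iterates $g_k$ are identified as the right auxiliary functions. The only conceptual subtlety is that nothing in the proof requires $r_0$ or $r_1$ to be strictly less than $1$: the averaging over the $|\GG_n|=2^n$ children, which produces the $2^n$ in the denominator, may compensate the possible growth $r^{2n}$ in the numerator, as long as $r^2<2$; when $r^2\ge 2$ one still gets a bound but it ceases to shrink with $n$, which explains why the subsequent sections of the paper need to introduce extra tools to handle the truly expanding regime.
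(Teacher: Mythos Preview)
Your proof is correct and follows essentially the same route as the paper: iterated conditioning on $\Ff_{n-k-1}$, applying $T_1(C)$ for $P(X_j,\cdot,\cdot)$ via Bobkov--G\"otze to the function $g_k\oplus g_k$, propagating Lipschitz norms through $\|(P_0+P_1)^k f\|_{Lip}\le r^k\|f\|_{Lip}$, and finishing with $\nu\in T_1(C)$ at the root. The only difference is that you use the sharp bound $\|g_k\oplus g_k\|_{Lip}\le\|g_k\|_{Lip}$ on $(E^2,d_{l_1})$ whereas the paper uses the cruder $\|f\oplus f\|_{Lip}\le 2\|f\|_{Lip}$, which accounts precisely for the factor $2$ you noticed; your constant is in fact tighter than the stated $\gamma_n$, so the proposition is certainly proved.
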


\begin{remark}
One can observe that for $r < \sqrt{2}$, the previous inequalities
are on the same order of magnitude that the inequalities obtained
thanks to Proposition \ref{prop:concentration_Xi} with $q<1$. For $r
< 2$ the above inequalities remain relevant since we just have a
negligible loss with respect to $|\GG_{n}|$. But for $r \geq \sqrt{2}$,
these inequalities are not significant (see the same type of limitations at the CLT level in \cite{DeGeMa12}).
\end{remark}

\begin{proof}
Let $f$ be a real Lipschitzian function on $E$, $n\in\NN$ and
$t\in\RR$. We have
\begin{multline*}
\EE\left[\exp\left(t2^{-n}\sum_{i\in\GG_{n}} f(X_{i})\right)\right]
= \EE\left[\exp\left(t2^{-n}\sum_{i\in\GG_{n-1}} (P_{0} +
P_{1})f(X_{i})\right)\right. \\ \times
\left.\EE\left[\exp\left(t2^{-n} \sum_{i\in\GG_{n-1}}
\left(f(X_{2i}) + f(X_{2i+1}) - (P_{0} +
P_{1})f(X_{i})\right)\right)\Big|\Ff_{n-1}\right]\right].
\end{multline*}
Thanks to the Markov property, we have
\begin{multline*}
\EE\left[\exp\left(t2^{-n} \sum_{i\in\GG_{n-1}} \left(f(X_{2i}) +
f(X_{2i+1}) - (P_{0} +
P_{1})f(X_{i})\right)\right)\Big|\Ff_{n-1}\right] \\ =
\prod_{i\in\GG_{n-1}} P\left(\exp\left(t2^{-n}\left(f\oplus f -
(P_{0} + P_{1})f\right)\right)\right)(X_{i})
\end{multline*}
where $f\oplus f$ is the function on $E^{2}$ defined by $f\oplus
f(x,y) = f(x) + f(y)$. We recall that from $(H_{1}(C))$ we have
$P(x,\cdot,\cdot)\in T_{1}(C)$ for all $x\in E$. Now, thanks to
Theorem \ref{thm:bobkov-gotze}, we have
\begin{multline*}
\prod_{i\in\GG_{n-1}} P\left(\exp\left(t2^{-n}\left(f\oplus f -
(P_{0} + P_{1})f\right)\right)\right)(X_{i}) \\ \leq
\prod_{i\in\GG_{n-1}} \exp\left(\frac{t^{2}C\|f\oplus
f\|_{Lip}^{2}}{2\times 2^{2n}}\right).
\end{multline*}
Since $\|f\oplus f\|_{Lip} \leq 2\|f\|_{Lip}$, we are led to
\begin{multline*}
\EE\left[\exp\left(t2^{-n}\sum_{i\in\GG_{n}} f(X_{i})\right)\right]
\leq \exp\left(\frac{2^{2}t^{2}2^{n-1}C\|f\|_{Lip}^{2}}{2\times
2^{2n}}\right)
\\ \times \EE\left[\exp\left(t2^{-n}\sum_{i\in\GG_{n-1}} (P_{0} +
P_{1})f(X_{i})\right)\right].
\end{multline*}
Doing the same for $\EE[\exp(t2^{-n}\sum_{i\in\GG_{n-1}}
(P_{0} + P_{1})f(X_{i}))]$ with $(P_{0} + P_{1})f$ replacing $f$ and
using the inequality
\begin{equation*}
\|(P_{0} + P_{1})f \oplus (P_{0} + P_{1})f\|_{Lip} \leq
2r\|f\|_{Lip},
\end{equation*}
we are led to
\begin{multline*}
\EE\left[\exp\left(t2^{-n}\sum_{i\in\GG_{n}} f(X_{i})\right)\right]
\leq \EE\left[\exp\left(t2^{-n}\sum_{i\in\GG_{n-2}} (P_{0} +
P_{1})^{2}f(X_{i})\right)\right] \\ \times
\exp\left(\frac{2^{2}t^{2}C\|f\|_{Lip}^{2}2^{n-1}}{2\times
2^{2n}}\right)\exp\left(\frac{2^{2}t^{2}C\|f\|_{Lip}^{2}r^{2}2^{n-2}}{2\times
2^{2n}}\right).
\end{multline*}
Iterating this method and using the inequalities
\begin{equation*}
\|(P_{0} + P_{1})^{k}f \oplus (P_{0} + P_{1})^{k}f\|_{Lip} \leq
2r^{k}\|f\|_{Lip} \quad \forall k\in\{1,\cdots,n-1\},
\end{equation*}
we obtain
\begin{multline*}
\EE\left[\exp\left(t2^{-n}\sum_{i\in\GG_{n}} f(X_{i})\right)\right]
\leq \exp\left(\frac{2^{2}t^{2}C\|f\|_{Lip}^{2}}{2\times 2^{2n}}
\sum_{k=0}^{n-1} r^{2k}2^{n-k-1}\right) \\ \times
\EE\left[\exp\left(t2^{-n}(P_{0} + P_{1})^{n}f(X_{1})\right)\right].
\end{multline*}
Since $\EE\left[t2^{-n}(P_{0} + P_{1})^{n}f(X_{1})\right] =
\EE\left[t2^{-n}\sum_{i\in\GG_{n}} f(X_{i})\right] = t2^{-n} \nu
(P_{0} + P_{1})^{n}f$, we obtain
\begin{multline*}
\EE\left[\exp\left(t2^{-n}\left(\sum_{i\in\GG_{n}} f(X_{i}) - \nu
(P_{0} + P_{1})^{n}f\right)\right)\right]\\ \leq
\exp\left(\frac{2^{2}t^{2}C\|f\|_{Lip}^{2}}{2\times 2^{2n}}
\sum_{k=0}^{n-1} r^{2k}2^{n-k-1}\right) \\ \times
\EE\left[\exp\left(t2^{-n}\left((P_{0} + P_{1})^{n}f(X_{1})\right) -
\nu (P_{0} + P_{1})^{n}f\right)\right].
\end{multline*}
Thanks to $(H_{1}(C))$, we conclude that
\begin{multline*}
\EE\left[\exp\left(t2^{-n}\left(\sum_{i\in\GG_{n}} f(X_{i}) - \nu
(P_{0} + P_{1})^{n}f\right)\right)\right]\\ \leq
\exp\left(\frac{2^{2}t^{2}C\|f\|_{Lip}^{2}}{2\times 2^{2n}}
\sum_{k=0}^{n} r^{2k}2^{n-k-1}\right)
\end{multline*}
and the results of the Proposition then follow from this last
inequality.
\end{proof}
For the ancestor-offspring triangle $(X_{i},X_{2i},X_{2i+1})$, we
have the following result which can be seen as a consequence of the
Proposition \ref{Prop:GC_G}.
\begin{corollary}\label{cor:GC_Gdelta}
Let $f$ be a real Lipschitzian function on $E^{3}$ and $n\in\NN$.
Assume that $(H_{1}(C))$ holds. Then $\overline{M}_{\GG_{n}}(f)$
satisfies $GC(\gamma'_{n})$ where
\begin{equation*} \displaystyle
\gamma'_{n} = \begin{cases} \frac{2C (1 + q)^{2}
\|f\|_{Lip}^2}{r^{2} |\GG_{n}|} \left(\frac{1 -
\left(r^{2}/2\right)^{n + 2}}{1 -
r^{2}/2}\right) \hspace{0.7cm} \text{if} \quad r\neq \sqrt{2} \\
\frac{2C(1+q)^{2}\|f\|^2_{Lip}(n + 2)}{|\GG_{n}|}  \hspace{2.4cm}
\text{if} \quad r = \sqrt{2}.
\end{cases}
\end{equation*}
\end{corollary}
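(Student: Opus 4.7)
The plan is to exploit the Markov identity $\EE[f(X_i, X_{2i}, X_{2i+1}) | \Ff_n] = Pf(X_i)$ in order to decompose the centered empirical mean into a conditionally centered offspring contribution and an ancestor contribution depending only on $Pf$ evaluated along the generation $\GG_n$. Writing
$$f(X_i, X_{2i}, X_{2i+1}) - \EE[f(X_i, X_{2i}, X_{2i+1})] = \big[f(X_i, X_{2i}, X_{2i+1}) - Pf(X_i)\big] + \big[Pf(X_i) - \EE[Pf(X_i)]\big],$$
I shall control the associated Laplace transform by conditioning on $\Ff_n$ and treating the two pieces separately.

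For the offspring part, conditionally on $\Ff_n$ the Markov property of a bifurcating chain ensures that the pairs $(X_{2i}, X_{2i+1})_{i\in\GG_n}$ are independent with respective laws $P(X_i,\cdot,\cdot)$, so the conditional Laplace transform factorizes as a product over $i\in\GG_n$. On each factor I apply Theorem \ref{thm:bobkov-gotze} to $P(X_i,\cdot,\cdot)\in T_1(C)$ (granted by $(H_1(C))(b)$), using that $(y,z)\mapsto f(X_i,y,z)$ is Lipschitz on $(E^2,d_{l_1})$ with norm at most $\|f\|_{Lip}$. Taking the product of $|\GG_n|=2^n$ independent Gaussian bounds produces a contribution to the full Laplace transform of the form $\exp(c\, t^2 C\|f\|_{Lip}^2/|\GG_n|)$ for an explicit numerical constant.

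For the ancestor part, $Pf\colon E\to\RR$ is Lipschitz with $\|Pf\|_{Lip}\le (1+q)\|f\|_{Lip}$, by exactly the computation carried out in the proof of Corollary \ref{cor:trans_X} from condition (c) of $(H_1(C))$. Since $|\GG_n|^{-1}\sum_{i\in\GG_n}Pf(X_i)=\overline{M}_{\GG_n}(Pf)$, I then apply Proposition \ref{Prop:GC_G} directly to the function $Pf$, obtaining a Gaussian concentration with constant of order
$$\frac{2C(1+q)^2\|f\|_{Lip}^2}{|\GG_n|}\sum_{k=0}^n (r^2/2)^k.$$

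Combining the two Gaussian bounds via the tower property yields $GC(\gamma'_n)$ for $\overline{M}_{\GG_n}(f)$, with $\gamma'_n$ equal to the sum of the two contributions. The principal remaining work is to recast this sum in the precise form stated in the corollary, which carries the factor $1/r^2$ and a geometric sum extending up to $n+1$. I expect this constant reconciliation to be the main obstacle: it suggests that the cleanest proof in fact mimics the iteration of Proposition \ref{Prop:GC_G} directly, interpreting the conditioning step for the triangle as the initial layer of a backward iteration of length $n+2$, in which the effective Lipschitz norms at successive steps are $\|f\|_{Lip}$ (triangle step), then $(1+q)\|f\|_{Lip}, r(1+q)\|f\|_{Lip}, \dots, r^n(1+q)\|f\|_{Lip}$ along the ancestors, so that the natural telescoping produces the geometric sum $\sum_{k=0}^{n+1}(r^2/2)^k$ stated in the corollary. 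The remainder of the argument is routine bookkeeping of geometric series.
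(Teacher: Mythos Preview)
Your proposal is correct and follows essentially the same route as the paper: condition on $\Ff_n$, use $(H_1(C))(b)$ and Theorem~\ref{thm:bobkov-gotze} on each factor $P(X_i,\cdot,\cdot)$ for the offspring piece, then invoke Proposition~\ref{Prop:GC_G} with $Pf$ in place of $f$ together with $\|Pf\|_{Lip}\le (1+q)\|f\|_{Lip}$ for the ancestor piece. Your anticipated interpretation of the constant is exactly what happens in the paper --- the offspring step is absorbed as the $k=-1$ term of the geometric sum, giving $\sum_{k=-1}^{n}(r^2/2)^k=(2/r^2)\sum_{k=0}^{n+1}(r^2/2)^k$, whence the factor $1/r^2$ and the upper index $n+2$ in the stated $\gamma'_n$.
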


\begin{proof}
Let $f$ be a real Lipschitzian function on $E^{3}$, $n\in\NN$ and
$t\in\RR$. We have
\begin{multline*}
\EE\left[\exp\left(t2^{-n}\sum_{i\in\GG_{n}}
f(X_{i},X_{2i},X_{2i+1})\right)\right] =
\EE\left[\exp\left(t2^{-n}\sum_{i\in\GG_{n}} Pf(X_{i})\right)\right.
\\ \times \left.\EE\left[\exp\left(t2^{-n}\sum_{i\in\GG_{n}}
\left(f(X_{i},X_{2i},X_{2i+1}) -
Pf(X_{i})\right)\right)\Big|\Ff_{n}\right]\right].
\end{multline*}
By the Markov property and thanks to the Proposition
\ref{prop:trans_bmc} and the Theorem \ref{thm:bobkov-gotze}, we have
\begin{multline*}
\EE\left[\exp\left(t2^{-n}\sum_{i\in\GG_{n}}
\left(f(X_{i},X_{2i},X_{2i+1}) -
Pf(X_{i})\right)\right)\Big|\Ff_{n}\right] \\ \leq
\exp\left(\frac{t^{2}C\|f\|_{Lip}^{2}2^{n}}{2\times 2^{2n}}\right).
\end{multline*}
Now, using $Pf$ instead of $f$ in the proof of the Proposition
\ref{Prop:GC_G} and using the fact that $\|Pf\|_{Lip}\leq
(1+q)\|f\|_{Lip}$ and
\begin{equation*}
\EE\left[2^{-n}\sum_{i\in\GG_{n}} f(X_{i},X_{2i},X_{2i+1})\right] =
\EE\left[2^{-n}\sum_{i\in\GG_{n}} Pf(X_{i})\right] =
2^{-n}\nu(P_{0}+P_{1})^{n}Pf,
\end{equation*}
we are led to
\begin{multline*}
\EE\left[\exp\left(t2^{-n}\left(\sum_{i\in\GG_{n}}
f(X_{i},X_{2i},X_{2i+1}) - \nu\left(P_{0} + P_{1}
\right)^{n}Pf\right)\right)\right] \\ \leq
\exp\left(\frac{4t^{2}C(1+q)^{2}\|f\|_{Lip}^{2}}{2^{2}\times 2^{n}}
\sum_{k=-1}^{n} \left(\frac{r^{2}}{2}\right)^{k}\right).
\end{multline*}
The results then follow by easy calculations.
\end{proof}
For the subtree $\TT_{n}$, we have the following result.

\begin{prop}\label{prop:GC_T}
Let $f$ be a real Lipschitzian function on $E$ and $n\in\NN$. Assume
that $(H_{1}(C))$ holds. Then $\overline{M}_{\TT_{n}}(f)$ satisfies
$GC(\tau_{n})$ where
\begin{equation*} \displaystyle
\tau_{n} = \begin{cases}
\frac{2C\|f\|_{Lip}^2}{(r-1)^{2}|\TT_{n}|}\left(1 + \frac{1 -
\left(r^{2}/2\right)^{n+1}}{1-r^{2}/2}\right) \hspace{1.7cm}
\text{if \, \, $r\neq \sqrt{2}$, $r\neq 1$} \vspace{0.1cm} \\
\frac{2C\|f\|^2_{Lip}}{(r-1)^{2}|\TT_{n}|} (r^{2}(n+1) +1 )
\hspace{2.4cm} \text{if \, \, $r = \sqrt{2}$} \vspace{0.2cm} \\
\frac{2C\|f\|^2_{Lip}}{|\TT_{n}|^{2}}\left(|\TT_{n}| -
\frac{n+1}{2}\right) \hspace{2.8cm} \text{if \, \, $r=1$}.
\end{cases}
\end{equation*}
\end{prop}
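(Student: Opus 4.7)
The strategy is to mimic the iterative conditioning argument used in the proof of Proposition~\ref{Prop:GC_G}, but adapted so that sums over \emph{every} generation up to $n$ are handled simultaneously. Set $u=t/|\TT_n|$ and $S_n=\sum_{i\in\TT_n}f(X_i)$; the aim is to control $\EE[e^{u(S_n-\EE S_n)}]$. The clever bookkeeping device is the family of auxiliary potentials
\[
g_k \;=\; \sum_{j=0}^k (P_0+P_1)^j f, \qquad k=0,1,\dots,n,
\]
which satisfies the key recursion $g_k = f + (P_0+P_1)g_{k-1}$ and the Lipschitz bound $\|g_k\|_{Lip}\le \sum_{j=0}^k r^j\,\|f\|_{Lip}$.

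The main induction claims that, after $k$ successive conditionings on $\Ff_{n-1},\Ff_{n-2},\dots,\Ff_{n-k}$, one obtains
\[
\EE\!\left[e^{u S_n}\right] \;\le\; \exp\!\Big(2u^2 C\sum_{j=0}^{k-1} |\GG_{n-1-j}|\,\|g_j\|_{Lip}^2\Big)\;\EE\!\left[\exp\!\Big(u\,\big[M_{\TT_{n-k-1}}(f) + \textstyle\sum_{\ell\in\GG_{n-k}} g_k(X_\ell)\big]\Big)\right].
\]
The induction step relies on the Markov property, on $P(x,\cdot,\cdot)\in T_1(C)$ combined with Theorem~\ref{thm:bobkov-gotze} applied to $g_k\oplus g_k$ (whose Lipschitz constant is bounded by $2\|g_k\|_{Lip}$ following the paper's convention), and on the telescoping identity
\[
M_{\TT_{m-1}}(f) + \sum_{\ell\in\GG_{m-1}}(P_0+P_1)g_k(X_\ell) \;=\; M_{\TT_{m-2}}(f) + \sum_{\ell\in\GG_{m-1}} g_{k+1}(X_\ell),
\]
which absorbs the accumulated conditional-mean terms into the next generation.

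After $n$ conditionings only $u g_n(X_1)$ remains, which is handled by $\nu\in T_1(C)$ via Bobkov--G\"otze once more. Recognising $\nu g_n=\EE S_n$ through the identity $\EE M_{\GG_k}(f)=\nu(P_0+P_1)^k f$, one concludes
\[
\EE\!\left[e^{u(S_n-\EE S_n)}\right] \;\le\; \exp\!\left(\frac{u^2 C}{2}\Big[\|g_n\|_{Lip}^2 + 4\sum_{k=1}^{n} 2^{\,n-k}\,\|g_{k-1}\|_{Lip}^2\Big]\right),
\]
so that $\tau_n = \frac{C}{|\TT_n|^2}\bigl[\|g_n\|_{Lip}^2 + 4\sum_{k=1}^{n} 2^{n-k}\|g_{k-1}\|_{Lip}^2\bigr]$.

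The remaining work is purely computational: insert the bound $\|g_m\|_{Lip}^2\le (r^{m+1}-1)^2/(r-1)^2\,\|f\|_{Lip}^2$ (respectively $(m+1)^2\|f\|_{Lip}^2$ when $r=1$), and evaluate the resulting geometric sums in terms of $r^2/2$. The main obstacle is the careful case analysis required to match the three announced expressions for $\tau_n$: the generic case $r\neq 1,\sqrt{2}$ is a straightforward geometric series; the case $r=\sqrt{2}$ requires taking a limit (the geometric series becoming arithmetic) which produces the additional $n+1$ factor; and $r=1$ is singular both in the $(r-1)^{-2}$ factor and in the bound for $\|g_m\|_{Lip}$, so it must be treated separately using $g_m = (m+1)f + \sum_{j=1}^m[(P_0+P_1)^j - I]f$-style estimates. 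Apart from this bookkeeping, no new analytical idea beyond the tools already deployed for Proposition~\ref{Prop:GC_G} is needed.
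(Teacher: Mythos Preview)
Your proposal is correct and follows essentially the same route as the paper's proof. The auxiliary potentials $g_k=\sum_{j=0}^k (P_0+P_1)^j f$ that you introduce explicitly are exactly the functions that appear implicitly in the paper's iteration (the paper writes $f+(P_0+P_1)f$ at the first step and iterates), and both arguments arrive at the same exponent $\frac{2C\|f\|_{Lip}^2}{|\TT_n|^2}\sum_{k=0}^{n}\bigl(\sum_{l=0}^k r^l\bigr)^2 2^{\,n-k-1}$ (up to a harmless factor on the $k=n$ term coming from $\nu\in T_1(C)$), after which the paper also just says ``the results then easily follow'' for the case analysis.
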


\begin{proof}
Let $f$ be a real Lipschitzian function on $E$ and $n\in\NN$. Note
that
\begin{equation*}
\EE\left[\sum_{i\in\TT_{n}} f(X_{i})\right] =
\nu\left(\sum_{m=0}^{n} (P_{0} + P_{1})^{m}f\right).
\end{equation*}
We have
\begin{multline*}
\EE\left[\exp\left(\frac{t}{|\TT_{n}|}\sum_{i\in\TT_{n}}
f(X_{i})\right)\right] =
\EE\left[\exp\left(\frac{t}{|\TT_{n}|}\sum_{i\in\TT_{n-2}}
f(X_{i})\right)\right. \\ \times
\exp\left(\frac{t}{|\TT_{n}|}\sum_{i\in\GG_{n-1}} \left(f +
\left(P_{0} + P_{1}\right)f\right)(X_{i})\right) \\ \times \left.
\EE\left[\exp\left(\frac{t}{|\TT_{n}|}\sum_{i\in\GG_{n-1}}
\left(f(X_{2i}) + f(X_{2i+1}) - (P_{0} +
P_{1})f(X_{i})\right)\right)\Big|\Ff_{n-1}\right]\right].
\end{multline*}
As in the proof of Proposition \ref{Prop:GC_G}, we have
\begin{multline*}
\EE\left[\exp\left(\frac{t}{|\TT_{n}|}\sum_{i\in\GG_{n-1}}
\left(f(X_{2i}) + f(X_{2i+1}) - (P_{0} +
P_{1})f(X_{i})\right)\right)\Big|\Ff_{n-1}\right] \\ \leq
\exp\left(\frac{2^{2}Ct^{2}\|f\|_{Lip}^{2}2^{n-1}}{2|\TT_{n}|^{2}}\right).
\end{multline*}
This leads us to
\begin{multline*}
\EE\left[\exp\left(\frac{t}{|\TT_{n}|}\sum_{i\in\TT_{n}}
f(X_{i})\right)\right] \leq
\exp\left(\frac{2^{2}Ct^{2}\|f\|_{Lip}^{2}2^{n-1}}{2|\TT_{n}|^{2}}\right)
\\ \times \EE\left[\exp\left(\frac{t}{|\TT_{n}|}\sum_{i\in\TT_{n-2}}
f(X_{i})\right) \exp\left(\frac{t}{|\TT_{n}|}\sum_{i\in\GG_{n-1}}
\left(f + \left(P_{0} + P_{1}\right)f\right)(X_{i})\right)\right].
\end{multline*}
Iterating this method, we are led to
\begin{multline*}
\EE\left[\exp\left(\frac{t}{|\TT_{n}|}\sum_{i\in\TT_{n}}
f(X_{i})\right)\right] \leq
\exp\left(\frac{2^{2}t^{2}C\|f\|_{Lip}^{2}}{2|\TT_{n}|^{2}}\sum_{k=0}^{n-1}
\left(\sum_{l=0}^{k} r^{l}\right)^{2} 2^{n-k-1}\right) \\ \times
\EE\left[\exp\left(\frac{t}{|\TT_{n}|}\sum_{m=0}^{n} \left(P_{0} +
P_{1}\right)^{m}f(X_{1})\right)\right]
\end{multline*}
and we then obtain thanks to \textit{(a)} of $(H_{1}(C))$ and
Theorem \ref{thm:bobkov-gotze}
\begin{multline*}
\EE\left[\exp\left(\frac{t}{|\TT_{n}|}\left(\sum_{i\in\TT_{n}}
f(X_{i}) - \nu\left(\sum_{m=0}^{n} (P_{0} +
P_{1})^{m}f\right)\right)\right)\right] \\ \leq
\exp\left(\frac{2^{2}t^{2}C\|f\|_{Lip}^{2}}{2|\TT_{n}|^{2}}\sum_{k=0}^{n}
\left(\sum_{l=0}^{k} r^{l}\right)^{2} 2^{n-k-1}\right).
\end{multline*}
In the last inequality we have used
\begin{equation*}
\left\|\sum_{m=0}^{n} (P_{0} + P_{1})^{m}f\right\|_{Lip} \leq
\left(\sum_{k=0}^{n} r^{k}\right) \|f\|_{Lip}.
\end{equation*}
The results then easily follows.
\end{proof}

For the ancestor-offspring triangle we have the following results
which can be seen as a consequence of the Proposition
\ref{prop:GC_T}.

\begin{corollary}\label{cor:GC_Tdelta}
Let $f$ be a real Lipschitzian function on $E^{3}$ and $n\in\NN$.
Assume that $(H_{1}(C))$ holds. Then $\overline{M}_{\TT_{n}}(f)$
satisfies $GC(\tau'_{n})$ where
\begin{equation*} \displaystyle
\tau'_{n} = \begin{cases}
\frac{2^{3}C(1+q)^{2}\|f\|_{Lip}^2}{|\TT_{n}|}\left(1 +
\frac{1}{(r-1)^{2}}\left(1 + \frac{r^{2}\left(1 -
\left(r^{2}/2\right)^{n+1}\right)}{1-r^{2}/2}\right)\right)
\hspace{0.05cm}
\text{if $r\neq \sqrt{2}$, $r\neq 1$} \vspace{0.1cm} \\
\frac{2^{3}C(1+q)^{2}\|f\|^2_{Lip}}{|\TT_{n}|} \left(1 + \frac{1 +
r^{2}(n + 1)}{(r-1)^{2}} \right)
\hspace{3.48cm} \text{if $r = \sqrt{2}$} \vspace{0.2cm} \\
\frac{2^{3}C(1+q)^{2}\|f\|^2_{Lip}}{|\TT_{n}|^{2}}\left(2|\TT_{n}| -
\frac{n+1}{2}\right) \hspace{3.8cm} \text{if $r=1$}.
\end{cases}
\end{equation*}
\end{corollary}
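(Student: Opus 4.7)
The proof follows the blueprint of Corollary \ref{cor:GC_Gdelta}, with Proposition \ref{prop:GC_T} in place of Proposition \ref{Prop:GC_G}, but now one must iterate the conditioning across \emph{all} generations of $\TT_n$ rather than handle only a single generation. The plan is to start from
$$\EE\!\left[\exp\!\left(\frac{t}{|\TT_n|}\sum_{i\in\TT_n}f(X_i,X_{2i},X_{2i+1})\right)\right],$$
condition on $\Ff_n$, and isolate the generation-$n$ triangles (those with $X_{2i},X_{2i+1}\in\GG_{n+1}$). Using $P(X_i,\cdot,\cdot)\in T_1(C)$ from $(H_1(C))$ together with the fact that $(y,z)\mapsto f(X_i,y,z)$ is $\|f\|_{Lip}$-Lipschitzian on $(E^2,d_{l_1})$, Theorem \ref{thm:bobkov-gotze} gives the conditional bound
$$\EE\!\left[\exp\!\left(\tfrac{t}{|\TT_n|}\sum_{i\in\GG_n}(f(X_i,X_{2i},X_{2i+1})-Pf(X_i))\right)\Big|\Ff_n\right]\le \exp\!\left(\tfrac{t^2 C\|f\|_{Lip}^2|\GG_n|}{2|\TT_n|^2}\right),$$
so that the outer expectation reduces to one in which the generation-$n$ triangles have been replaced by $\sum_{i\in\GG_n}Pf(X_i)$.

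At each subsequent step I would bundle the accumulated $Pf$-type contributions at generation $k+1$ with the triangles rooted at generation $k$: after the first step, define $\widetilde f_{n-1}(x,y,z)=f(x,y,z)+Pf(y)+Pf(z)$, so that $\sum_{i\in\GG_{n-1}}\widetilde f_{n-1}(X_i,X_{2i},X_{2i+1})=\sum_{i\in\GG_{n-1}}f(X_i,X_{2i},X_{2i+1})+\sum_{i\in\GG_n}Pf(X_i)$. I would then condition on $\Ff_{n-1}$ and apply Bobkov--G\"otze to $P(X_i,\cdot,\cdot)$ using the partial Lipschitz bound $\|\widetilde f_{n-1}\|_{Lip,(E^2,d_{l_1})}\le \|f\|_{Lip}+\|Pf\|_{Lip}\le (2+q)\|f\|_{Lip}\le 2(1+q)\|f\|_{Lip}$, where $\|Pf\|_{Lip}\le (1+q)\|f\|_{Lip}$ was established in the proof of Corollary \ref{cor:trans_X}. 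Pushing forward by $P$ produces a new drift term involving $Pf+(P_0+P_1)Pf$, which is bundled next with the triangles in $\GG_{n-2}$, and so on. Repeating this for $k=n-2,\ldots,0$ produces a sequence of enlarged functions $\widetilde f_k$ whose cumulative Lipschitz constants form a geometric series in $r=r_0+r_1$, exactly as in the iteration of the proof of Proposition \ref{prop:GC_T}.

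Summing the conditional corrections contributed by all $n+1$ generations and combining with the base case yields the announced bound on the Laplace transform of $\overline{M}_{\TT_n}(f)-\EE[\overline{M}_{\TT_n}(f)]$, and then $GC(\tau'_n)$ follows from Theorem \ref{thm:bobkov-gotze}. The factor $2^3=8$ in $\tau'_n$ versus the factor $2$ in Proposition \ref{prop:GC_T} originates precisely from the crude overbound $(2+q)^2\le 4(1+q)^2$ on the enlarged Lipschitz norm, while the three regimes $r\neq 1,\sqrt{2}$, $r=\sqrt{2}$, and $r=1$ are distinguished exactly as in Proposition \ref{prop:GC_T} by the evaluation of the geometric series $\sum_k (r^2/2)^k$. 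I expect the main technical obstacle to be the bookkeeping: tracking the Lipschitz norms of the iterated enlargements $\widetilde f_k$ and reorganizing the resulting telescoping sum into the closed-form expressions for $\tau'_n$ in the three cases, rather than any single genuinely new estimate.
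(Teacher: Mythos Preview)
Your iterative-conditioning approach is sound and would prove the statement, but it is \emph{not} the route the paper takes. The paper's proof is considerably shorter and more modular: it applies Cauchy--Schwarz (H\"older with $p=q=2$) to split
\[
\EE\!\Big[\exp\!\Big(\tfrac{t}{|\TT_n|}\sum_{i\in\TT_n}\big(f(\Delta_i)-\EE[f(\Delta_i)]\big)\Big)\Big]
\le \Big(\EE\,e^{\frac{2t}{|\TT_n|}\sum_i(f(\Delta_i)-Pf(X_i))}\Big)^{1/2}
      \Big(\EE\,e^{\frac{2t}{|\TT_n|}\sum_i(Pf(X_i)-\EE[Pf(X_i)])}\Big)^{1/2},
\]
bounds the first factor by one conditional Bobkov--G\"otze step (exactly as in the first iteration of Corollary~\ref{cor:GC_Gdelta}, yielding $\exp(2t^2C\|f\|_{Lip}^2/|\TT_n|)$), and feeds the second factor directly into Proposition~\ref{prop:GC_T} with $Pf$ in place of $f$ and $\|Pf\|_{Lip}\le(1+q)\|f\|_{Lip}$. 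No enlarged functions $\widetilde f_k$ and no new bookkeeping are needed.

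Consequently your attribution of the prefactor $2^3$ to the overbound $(2+q)^2\le 4(1+q)^2$ is not what happens in the paper: there the $8$ arises from the H\"older doubling $t\mapsto 2t$ (a factor $4$ after squaring and halving) combined with the factor $2$ already present in the $\tau_n$ of Proposition~\ref{prop:GC_T}. Your direct scheme would in fact avoid the H\"older loss and give a slightly sharper constant; since $GC(\kappa)$ is monotone in $\kappa$, this still establishes $GC(\tau'_n)$. The trade-off is clear: the paper's argument is two lines of reuse of earlier results, while yours reproves the core iteration with the extra complication of tracking $\|\widetilde f_k\|_{Lip}$ through the generations.
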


\begin{proof}
Let $f$ be a real Lipschitzian function on $E^{3}$ and $n\in\NN$. By
H\"{o}lder inequality and using the fact that
\begin{equation*}
\EE\left[\sum_{i\in\TT_{n}} f(\Delta_{i})\right] =
\EE\left[\sum_{i\in\TT_{n}} Pf(X_{i})\right],
\end{equation*}
we have
\begin{multline*}
\EE\left[\exp\left(\frac{t}{|\TT_{n}|}\left(\sum_{i\in\TT_{n}}
f(\Delta_{i}) - \EE\left[\sum_{i\in\TT_{n}}
f(\Delta_{i})\right]\right)\right)\right] \\ \leq
\left(\EE\left[\exp\left(\frac{2t}{|\TT_{n}|}\left(\sum_{i\in\TT_{n}}
\left(f(\Delta_{i}) -
Pf(X_{i})\right)\right)\right)\right]\right)^{1/2} \\ \times
\left(\EE\left[\exp\left(\frac{2t}{|\TT_{n}|}\left(\sum_{i\in\TT_{n}}
Pf(X_{i}) - \EE\left[\sum_{i\in\TT_{n}}
Pf(X_{i})\right]\right)\right)\right]\right)^{1/2}.
\end{multline*}
We bound the first term of the right hand side of the previous
inequality by using the same calculations as in the first iteration
of the proof of Corollary \ref{cor:GC_Gdelta}. We then have
\begin{multline*}
\left(\EE\left[\exp\left(\frac{2t}{|\TT_{n}|}\left(\sum_{i\in\TT_{n}}
\left(f(\Delta_{i}) -
Pf(X_{i})\right)\right)\right)\right]\right)^{1/2} \\ \leq
\exp\left(\frac{2t^{2}C\|f\|_{Lip}^{2}|\TT_{n}|}{2|\TT_{n}|^{2}}\right).
\end{multline*}
For the second term, we use the proof of the Proposition
\ref{prop:GC_T} with $Pf$ instead of $f$. We then have
\begin{multline*}
\left(\EE\left[\exp\left(\frac{2t}{|\TT_{n}|}\left(\sum_{i\in\TT_{n}}
Pf(X_{i}) - \EE\left[\sum_{i\in\TT_{n}}
Pf(X_{i})\right]\right)\right)\right]\right)^{1/2} \\ \leq
\exp\left(\frac{2^{3}t^{2}R(1+q)^{2}\|f\|_{Lip}^{2}}{2|\TT_{n}|^{2}}
\sum_{k=0}^{n} \left(\sum_{l=0}^{k}
r^{l}\right)^{2}2^{n-k-1}\right).
\end{multline*}
The results then follow by easy analysis and this ends the proof.
\end{proof}

%%%%%%%%%%%%
%%%%%%%%%%%%

\subsection{Deviation inequalities towards the invariant measure of the randomly drawn chain}

All the previous results do not assume any "stability" of the Markov
chain on the binary tree, whereas for usual asymptotic theorem the
convergence is towards mean of the function with respect to the
invariant probability measure of the random lineage chain. To
reinforce this asymptotic result by non asymptotic deviation
inequality, it is thus fundamental to be able to replace for example
$\EE(\overline{M}_{\TT_n}(f))$ by some asymptotic quantity. This
random lineage chain is a Markov chain with transition kernel $Q =
(P_0 + P_1)/2$. We shall now suppose the existence of a probability
measure $\pi$ such that $\pi Q=\pi$. We will consider a slight
modification of our main assumption and as we are mainly interested
in concentration inequalities, let us focus in the $p=1$ case:

\begin{assumption}[$H'_{1}(C)$]\label{ass:hp2} $\,$
\begin{itemize}
\item [(a)] $\nu\in T_{1}(C)$;
\item [(b)] $P_{b}(x,\cdot) \in T_{1}(C)$, $\forall x\in E$, $b=0,1$ ;
\item [(c)] $W_{1}^{d}(P(x,\cdot,\cdot),P(\tilde{x},\cdot,\cdot)) \leq
q \,d(x,\tilde{x})$, $\forall x, \tilde{x}\in E$ and some $q>0$. And
for $r_0,r_1>0$ such that $r_0+r_1<2$, for $b=0,1$,
$W_{1}^{d}(P_b(x,\cdot),P_b(\tilde{x},\cdot)) \leq r_b
\,d(x,\tilde{x})$, $\forall x, \tilde{x}\in E$.
\end{itemize}
\end{assumption}
Under this assumption, using the convexity of $W_1$ (see
\cite{Villani}), we easily see that
$$W_1(Q(x,\cdot),Q(\tilde x,\cdot))\le \frac{r_0+r_1}2 d\left(x,\tilde x\right),\qquad \forall x,\tilde x$$
ensuring the strict contraction of $Q$, and then the exponential
convergence towards $\pi$ in Wasserstein distance, namely (assuming
that $\pi$ has a first moment)
$$W_1(Q^n(x,\cdot),\pi)\le \left(\frac{r_0+r_1}2\right)^{n} \int d(x,y)\pi(dy).$$
Let us show that we may now control easily the distance between
$\EE(\overline{M}_{\TT_n}(f))$ and $\pi(f)$. Indeed, we may first
remark that
$$\EE\left(\sum_{k\in\GG_n}f(X_k)\right) = \nu(P_0 + P_1)^nf$$
so that assuming that $f$ is $1$-lipschitzian, and by the dual
version of the Wasserstein distance
\begin{eqnarray*}
\left| \EE(\overline{M}_{\TT_n}(f))-\pi(f)\right| &=&
\frac{1}{|\TT_n|}\left|\sum_{j=1}^n\EE\left(\sum_{k\in\GG_j}(f(X_k)-\pi(f)\right)\right|\\
&=&\frac{1}{|\TT_n|}\left|\sum_{j=1}^n2^j\nu\left(\frac{P_0+P_1}2\right)^j (f-\pi(f))\right|\\
&\le&\frac{1}{|\TT_n|}\sum_{j=1}^n2^j W_1(\nu Q^j,\pi)\\
&\le&\frac{1}{|\TT_n|}\sum_{j=1}^n(r_0+r_1)^j\\
&\le&c_n:=\left\{ \begin{array}{ll}
c\left(\frac{r_0+r_1}2\right)^{n+1}&\mbox{ if }r_0+r_1\not=1\\
c\frac{n}{2^{n+1}}&\mbox{ if }r_0+r_1=1\end{array}\right.
\end{eqnarray*}
for some universal $c$, which goes to 0 exponentially fast as soon as $r_0+r_1<2$ which was
assumed in $(H_1'(C))$. We may then see that for $r>c_n$
$$\PP\left(\overline{M}_{\TT_n}(f)-\pi(f)>r\right)\le \PP\left(\overline{M}_{\TT_n}(f)-\EE(\overline{M}_{\TT_n}(f))>r-c_n\right)$$
and one then applies the result of the previous subsection.

%%%%%%%%%%%%%%%

\section{Application to nonlinear bifurcating autoregressive models}

The setting will be here the case of the nonlinear
bifurcating autoregressive models. It has been
considered as a particular realistic model to study
cell aging \cite{SteMadPauTad}, and the asymptotic
behavior of parametric estimators as well as non parametric
estimators has been considered in an important series of work,
see e.g. \cite{BH99,BH00,BZ4,BZ105,BZ205, Guyon,  BDSGP,DM,SGM12,DSGPM11,BiDj12}
(and for example in the random coefficient setting in \cite{DeGeMa12}).\\

We will then consider the following model where to simplify the
state space $E=\RR$, where $\Ll(X_1) = \mu_0$ satisfies $T_1$ and we
recursively define on the binary tree as before
\begin{equation}
\left\{\begin{array}{l}
X_{2k}\hspace{0.4cm}=f_0(X_k)+\varepsilon_{2k}\\
X_{2k+1}=f_1(X_k)+\varepsilon_{2k+1}
\end{array}\right.
\end{equation}
with the following assumptions:

\begin{assumption}[NL]\label{ass:NL}
$f_0$ and $f_1$ are Lipschitz continuous function.
\end{assumption}

\begin{assumption}[No]\label{ass:No}
$(\varepsilon_{k})_{k\ge1}$ are centered i.i.d.r.v. and for all
$k\ge0$, $\varepsilon_k$ have law $\mu_\varepsilon$ and satisfy for
some positive $\delta_\varepsilon$,
$\mu_{\varepsilon}\left(e^{\delta_\varepsilon x^2}\right)<\infty$.
Equivalently, $\mu_\varepsilon$ satisfies $T_1(C_\varepsilon)$.
\end{assumption}

It is then easy to deduce that under these two assumptions, we
perfectly match with the previous framework. Denoting $P_0$ and
$P_1$ as previously, we see that $(H'_1)$ is verified, with the
additional fact that $P=P_0\otimes P_1$. We will do the proof for
$P_0$, being the same for $P_1$. The conclusion follows for $P$ by
conditional independence of $X_{2k}$ and $X_{2k+1}$. Let us first
prove that $P_0(x,\cdot)$ satisfies $T_1$. Indeed $P_0(x,\cdot)$ is
the law of $f_0(x)+\varepsilon_{2k}$, and we have thus to verify the
Gaussian integrability property of Theorem \ref{thm:dgw}. To this
end, consider $x_0=f(x)$, and choose $\delta_{\varepsilon}$ of
condition (No) to verify
the Gaussian integrability property. We have thus that $P_{0}$ satisfies $T_1(C_P)$.\\
We prove now the Wasserstein contraction property. $P_0(x,\cdot)$ is
of course the law of $f_0(x)+\varepsilon_k$. Here $\varepsilon_k$
denotes a generic random variable and thus the law of $P_0(y,\cdot)$
is the law of $f_0(y)+\varepsilon_k$ and an upper bound of the
Wasserstein distance between $P_0(x,\cdot)$ and $P_0(y,\cdot)$ can
then be obtained by the coupling where we really choose the same
noise $\varepsilon_k$ for the realization of the two marginal laws
so that

Let $f$ be any Lipschitz function such that $\|f\|_{Lip}\leq 1$ 
\begin{eqnarray*}
\left|\int_S f(z) P_0(x,dz)-\int_S f(z) P_0(y,dz)\right| &=&\mathbb{E}\left[f\left(f_0(x)+\varepsilon_1\right)-f\left(f_0(y)+\varepsilon_1\right)\right]\\
&\leq& \|f\|_{Lip}|f_0(x)-f_0(y)|.
\end{eqnarray*}
By the Monge-Kantorovitch duality expression of the Wasserstein distance, one has then
$$W_1(P_0(x,\cdot),P_0(y,\cdot))\le |f_0(x)-f_0(y)|
\le \|f_0\|_{Lip}|x-y|.$$

Thus under (NL) and (No), our model fits
in the framework  of the previous section with
$q=\|f_0\|_{Lip}+\|f_1\|_{Lip}$, $r_0=\|f_0\|_{Lip}$ and
$r_1=\|f_1\|_{Lip}$. We will be interested here in the non
parametric estimation of the autoregression functions $f_0$ and
$f_1$, and we will use Nadaraya-Watson kernel type estimator, as
considered in \cite{BO}. Let $K$ be a kernel satisfying the
following assumption.
\begin{assumption}[Ker]\label{ass:ker}
The function $K$ is non negative, has compact support $[-R,R]$, is
Lipschitz continuous with constant $\| K\|_{Lip}$ and such that
$\int K(z)dz=1$.
\end{assumption}

Let us also introduce as usual a bandwidth $h_n$ which will be taken
to simplify as $h_n:=|\TT_n|^{-\alpha}$ for some $0<\alpha<1$. The
Nadaraya-Watson estimators are then defined as for $x\in\RR$
$$\widehat f_{0,n}(x) :=
\frac{\displaystyle \frac{1}{|\TT_n|h_n}
\sum_{k\in\TT_n}K\left(\frac{X_k-x}{h_n}\right)\,X_{2k}}
{\displaystyle \frac{1}{|\TT_n|h_n}
\sum_{k\in\TT_n}K\left(\frac{X_k-x}{h_n}\right)}$$
$$\widehat f_{1,n}(x):=\frac{\displaystyle \frac{1}{|\TT_n|h_n}
\sum_{k\in\TT_n}K\left(\frac{X_k-x}{h_n}\right)\,X_{2k+1}}
{\displaystyle \frac{1}{|\TT_n|h_n}\sum_{k\in\TT_n}
K\left(\frac{X_k-x}{h_n}\right)}.$$ Let us focus on $f_0$, as it
will be exactly the same for $f_1$ and fix $x\in\RR$. We will be
interested here in deviation inequalities of $\widehat f_{0,n}(x)$
with respect to $f(x)$. One has to face two problems. First it is an
autonormalized estimator. It will be dealt with considering
deviation inequalities for the numerator and denominator separately
and reunite them. Secondly $(x,y)\to K(x)y$ is in fact not
Lipschitzian in general state space, so that the result of the
previous section for deviation inequalities of Lipschitzian function
of ancestor-offspring may not be applied directly. Let us tackle
this problem. By definition
\begin{eqnarray*}
\widehat f_{0,n}(x)-f(x)&=&\frac {\displaystyle
\frac{1}{|\TT_n|h_n}\sum_{k\in\TT_n}
K\left(\frac{X_k-x}{h_n}\right)\,
[f_0(X_{k})-f_0(x)+\varepsilon_{2k}]}
{\displaystyle \frac{1}{|\TT_n|h_n}\sum_{k\in\TT_n}K\left(\frac{X_k-x}{h_n}\right)}\\
&:=&\frac{N_n+M_n}{D_n}.
\end{eqnarray*}
where
$$N_n:=\sum_{k\in\TT_n}K\left(\frac{X_k-x}{h_n}\right)\,[f_0(X_{k})-f_0(x)],$$
$$M_n:=\sum_{k\in\TT_n}K\left(\frac{X_k-x}{h_n}\right)\,\varepsilon_{2k},$$
$$ D_n=\sum_{k\in\TT_n}K\left(\frac{X_k-x}{h_n}\right).$$
Denote also $\tilde N_n = N_n/(|\TT_n|h_n)$, $\tilde
M_n=M_n/(|\TT_n|h_n)$, $\tilde D_n=D_n/(|\TT_n|h_n)$. Let us
remark that $D_n$ and $M_n$ completely enter the framework of
Proposition \ref{prop:GC_T}. We may thus prove
\begin{prop}\label{prop:dev_ker}
Let us assume that $(NL)$, $(No)$ and $(Ker)$ holds, and $
q=\|f_0\|_{Lip}+\|f_1\|_{Lip}<\sqrt{2}$. Let us also suppose that
$\alpha<1/4$. Then for all $r>0$ such that $r>\EE(\tilde
N_n)/\EE(\tilde D_n)$, there exists constants $C,C',C''>0$ %and
%$\alpha_n\to0$, $\beta_n\to0$
such that
\begin{eqnarray*}\PP\left(|\widehat f_{0,n}(x)-f(x)|>r\right)& \le &2\exp\left(-C(r\EE(\tilde D_n)-\EE(\tilde N_n))^2
|\TT_{n}|h_{n}^{2}\right)\\&&+2\exp\left(-C'\frac{(r\EE(\tilde D_n)-\EE(\tilde N_n))^2|\TT_{n}|h_{n}^{2}}{1+C''\frac{r^2}{h_n^2}}\right)
.
\end{eqnarray*}
\end{prop}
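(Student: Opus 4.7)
The plan is to split the event $\{|\widehat{f}_{0,n}(x) - f(x)| > r\}$ into subevents amenable to the Gaussian concentration tools of Section \ref{sec:con_bmc}. Since $\widehat{f}_{0,n}(x) - f(x) = (\tilde N_n + \tilde M_n)/\tilde D_n$ and $\Delta := r\EE(\tilde D_n) - \EE(\tilde N_n) > 0$ by hypothesis, adding and subtracting means rewrites $\{\widehat{f}_{0,n}(x) - f(x) > r\}$ as $\{A_n > \Delta\}$ with
\[
A_n := (\tilde N_n - \EE \tilde N_n) + \tilde M_n + r(\EE \tilde D_n - \tilde D_n),
\]
the other side being handled analogously. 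A union bound then gives
\[
\PP(A_n > \Delta) \le \PP(\tilde M_n > \Delta/2) + \PP(B_n > \Delta/2),\quad B_n := (\tilde N_n - \EE \tilde N_n) + r(\EE \tilde D_n - \tilde D_n),
\]
so that the $\tilde M_n$ summand will produce the first exponential term of the statement and the $B_n$ summand the second, with its $1 + C'' r^2/h_n^2$ denominator.

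For $B_n$, which depends only on the BMC trajectory, I would apply Proposition \ref{prop:GC_T}. The function $g_D(y) := K((y-x)/h_n)/h_n$ is Lipschitz with constant $\|K\|_{Lip}/h_n^2$. Thanks to the compact support of $K$, the function $g_N(y) := K((y-x)/h_n)(f_0(y)-f_0(x))/h_n$ is Lipschitz with constant only of order $1/h_n$: on the support of $K((\cdot-x)/h_n)$ one has $|f_0(y)-f_0(x)| \le \|f_0\|_{Lip} R h_n$, and this extra factor $h_n$ compensates the $1/h_n^2$ from the derivative of $K((\cdot-x)/h_n)$. Since $q < \sqrt 2$ translates into $r_0 + r_1 < \sqrt 2$ in the notation of Proposition \ref{prop:GC_T}, that proposition gives $\tilde D_n - \EE \tilde D_n$ satisfying $GC(\kappa_D)$ with $\kappa_D = O(1/(|\TT_n|h_n^4))$ and $\tilde N_n - \EE \tilde N_n$ satisfying $GC(\kappa_N)$ with $\kappa_N = O(1/(|\TT_n|h_n^2))$. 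A Cauchy--Schwarz bound on the joint Laplace transform then yields $B_n$ satisfying $GC(O((1+r^2/h_n^2)/(|\TT_n|h_n^2)))$, and Markov's inequality delivers the second exponential term.

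For $\tilde M_n$ the issue is that the natural ancestor-offspring function $(a,b,c)\mapsto K((a-x)/h_n)(b-f_0(a))$ fails to be Lipschitz in the offspring variable $b$, so Proposition \ref{prop:GC_T} cannot be invoked. The plan is to compute $\EE[\exp(\lambda \tilde M_n)]$ by iterated conditioning along $(\Ff_j)_{j=n,n-1,\dots,0}$. Writing $\tilde M_n = \sum_{j=0}^n \tilde M_n^{(j)}$ with $\tilde M_n^{(j)} := \frac{1}{|\TT_n|h_n}\sum_{k\in\GG_j} K((X_k-x)/h_n)\varepsilon_{2k}$, the weights $K((X_k-x)/h_n)$ for $k\in\GG_j$ are $\Ff_j$-measurable and uniformly bounded by $\|K\|_\infty$, while $\{\varepsilon_{2k}\}_{k\in\GG_j}$ are i.i.d., centered, with law $\mu_\varepsilon \in T_1(C_\varepsilon)$ and independent of $\Ff_j$. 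Theorem \ref{thm:bobkov-gotze} applied conditionally then gives
\[
\EE\!\left[\exp\!\left(\lambda \tilde M_n^{(j)}\right)\,\Big|\,\Ff_j\right] \le \exp\!\left(\frac{\lambda^2 C_\varepsilon \|K\|_\infty^2 |\GG_j|}{2(|\TT_n|h_n)^2}\right).
\]
Since $\tilde M_n^{(0)} + \cdots + \tilde M_n^{(j-1)}$ is $\Ff_j$-measurable, iterating this bound from $j=n$ down to $j=0$ and using $\sum_{j=0}^n |\GG_j| = |\TT_n|$ gives $\tilde M_n$ satisfying $GC(O(1/(|\TT_n|h_n^2)))$, and Markov's inequality produces the first exponential term.

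The main obstacle is this last step: securing Gaussian concentration for $\tilde M_n$ in the absence of any global Lipschitz control of the underlying function. The bifurcating structure is exploited by separating the bounded $\Ff_j$-measurable kernel weights from the conditionally i.i.d.\ noises (sub-Gaussian via $T_1(C_\varepsilon)$). Once the three Gaussian concentration bounds are assembled, the decomposition above yields the claimed two-term inequality, with constants $C, C', C''$ depending only on $\|K\|_{Lip}$, $\|K\|_\infty$, $R$, $\|f_0\|_{Lip}$, $C_\varepsilon$ and on the constants of Proposition \ref{prop:GC_T} (the regime $q < \sqrt 2$ ensuring the favorable $O(\|g\|_{Lip}^2/|\TT_n|)$ order).
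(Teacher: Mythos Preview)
Your proposal is correct and follows essentially the same route as the paper. You use the same decomposition $\widehat f_{0,n}(x)-f(x)=(\tilde N_n+\tilde M_n)/\tilde D_n$, the same centering trick to reduce to $\PP(A_n>\Delta)$, and the same split into a ``martingale noise'' part $\tilde M_n$ and a ``BMC trajectory'' part $B_n=(\tilde N_n-\EE\tilde N_n)-r(\tilde D_n-\EE\tilde D_n)$; your conditioning argument for $\tilde M_n$ is exactly the ``simple conditioning argument'' the paper invokes without details. The only cosmetic difference is that for $B_n$ you bound $\tilde N_n$ and $\tilde D_n$ separately via Proposition~\ref{prop:GC_T} and then combine by Cauchy--Schwarz on the Laplace transforms, whereas the paper applies Proposition~\ref{prop:GC_T} once to the single function $y\mapsto K((y-x)/h_n)(f_0(y)-f_0(x))-rK((y-x)/h_n)$, whose Lipschitz constant is $R\|K\|_{Lip}\|f_0\|_{Lip}+\|f_0\|_{Lip}\|K\|_\infty+r\|K\|_{Lip}/h_n$; both yield the same $O((1+r^2/h_n^2)/(|\TT_n|h_n^2))$ variance proxy.
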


\begin{proof}
Remark first that, by $(Ker)$, $K$ is Lipschitz continuous so that
$y\to K(\frac{y-x}{h_n})$ is also lipschitzian with constant $\|
K\|_{Lip}/h_n$. The mapping $y\to
K(\frac{y-x}{h_n})(f_0(y)-f_0(x))$, as $K$ has a compact support and
$f_0$ is Lipschitzian, is also Lipschitzian with constant
$R\|K\|_{Lip}\|f_0\|_{Lip}+\|f_{0}\|_{Lip}\|K\|_{\infty}$. We can
then use Proposition \ref{prop:GC_T} to get deviation inequalities
for $D_n$. For all positive $r$ there exists a constant $L$
(explicitly given through Proposition \ref{prop:GC_T}), such that
$$\PP(|D_n-\EE(D_n)|> r |\TT_n|h_n)\le 2\exp\left(-Lr^2|\TT_n|h_n^4/ \| K\|_{Lip}^2\right).$$
For $N_n+M_n$ we cannot directly apply Proposition \ref{prop:GC_T}
due to the successive dependence of $X_k$ at generation $n$ and
$\varepsilon_{2k}$ of generation $n-1$. But as we are interested in
deviation inequalities, we may split the deviation coming from each
term.  For $N_n$, it is once again a simple application of
Proposition \ref{prop:GC_T},
$$\PP(|N_n-\EE(N_n)|> r|\TT_n|h_n)\le
2\exp\left(
\frac{-Lr^2|\TT_n|h_n^2}{(R\|K\|_{Lip}\|f_0\|_{Lip}+\|f_{0}\|_{Lip}\|K\|_{\infty})^2}\right).$$

Note that $\varepsilon_{2k}$ is independent of $X_k$, and centered
so that $\EE(M_n)=0$, and satisfies a transportation inequality.
Note also  that $K$ is bounded. By simple conditioning argument, we
may control the Laplace transform of $M_n$ quite simply. We then
have for all positive $r$
$$\PP(|M_n|>r|\TT_n|h_n)\le 2
\exp\left(-r^2\frac{|\TT_n|h_n^2\|K\|^2_\infty}{2C\varepsilon}\right).$$

However, we cannot use directly these estimations as the estimator is autonormalized. Instead
\begin{eqnarray*}
&&\PP\left(\widehat f_{0,n}(x)-f(x)>r\right)\\
&&\qquad\le\PP(\tilde{
N}_{n}+\tilde{M}_{n}>r \tilde{D}_n)\\
&&\qquad\le \PP\left(\tilde{N}_{n}-\EE(\tilde{N}_{n})-r(\tilde{D_n}-\EE(\tilde{D}_n))+\tilde{M}_{n}>r \EE(\tilde{D_n})-\EE(\tilde{N}_{n})\right)\\
&&\qquad\le \PP\left(\tilde{N}_{n}-\EE(\tilde{N}_{n})-r(\tilde{D_n}-\EE(\tilde{D}_n)>(r \EE(\tilde{D_n})-\EE(\tilde{N}_{n}))/2\right)\\
&&\qquad \hspace*{4.45cm} + \PP\left(\tilde{M}_n>(r
\EE(\tilde{D_n})-\EE(\tilde{N}_{n}))/2\right)
\end{eqnarray*}
Remark now to conclude that $K((y-x)/h_n)(f(y)-f(x))+K((y-x)/h_n)$
is $(R\|K\|_{Lip}\|f_0\|_{Lip} + \|f_{0}\|_{Lip}\|K\|_{\infty} + r\|
K\|_{Lip}/h_n)$-Lipschitzian, and we may then proceed as before.
\end{proof}
\begin{remark}
In order to get fully practical deviation inequalities, let us
remark that
\begin{equation*}
\EE\left[\tilde D_n\right] = \frac{1}{|\TT_{n}|h_{n}}
\sum_{m=0}^{n} 2^{m}\mu_{0} Q^{m}H {\underset{n\rightarrow +
\infty}{{\longrightarrow}} } \nu(x)
\end{equation*}
where $H(y) = K((y-x)/h_{n})$, $\nu(\cdot)$ is the invariant density
of the Markov chain associated to a random lineage and
\begin{equation*}
\EE\left[\tilde N_n\right] = \frac{1}{|\TT_{n}|h_{n}}
\sum_{m=0}^{n} 2^{m}\left(\mu_{0}Q^{m}(Hf_{0}) -
f_{0}(x)\mu_{0}Q^{m}H\right) {\underset{n\rightarrow +
\infty}{{\longrightarrow}} } 0.
\end{equation*}
We refer to \cite{BO} for quantitative versions of these limits.\end{remark}

\begin{remark}
Of course this non parametric estimation is in some sense
incomplete, as we would have liked to consider a deviation
inequality for $\sup_{x}|\hat f_{0,n}(x)-f_0(x)|$. The problem is
somewhat much more complicated here, as the estimator is self
normalized. However, it is a crucial problem that we will consider
in the near future. For some ideas which could be useful here, let
us cite the results of \cite{BGV} for (uniform) deviation
inequalities for estimators of density in the i.i.d. case, and to
\cite{FG} for control of the Wasserstein distance of the empirical
measure of i.i.d.r.v. or of Markov chains.
\end{remark}

%\subsection{Deviation inequalities for the estimator of the
%$\TT$-transition probability}

\begin{remark}[Estimation of the $\TT$-transition probability]
We assume that the process has as initial law, the invariant probability
$\nu$. We denote by $f$ the density of $(X_{1},X_{2},X_{3})$. For
the estimation of $f$, we propose the estimator $\widehat{f}_{n}$
defined by
\begin{equation*}
\widehat{f}_{n}(x,y,z) = \frac{1}{|\TT_{n}|h_{n}} \sum_{k\in\TT_{n}}
K\left(\frac{x - X_{k}}{h_{n}}\right) K\left(\frac{y -
X_{2k}}{h_{n}}\right) K\left(\frac{z - X_{2k+1}}{h_{n}}\right).
\end{equation*}
An estimator of the $\TT$-probability transition is then given by
\begin{equation*}
\widehat{P}_{n}\left(x,y,z\right) =
\frac{\widehat{f}_{n}(x,y,z)}{\tilde D_n}.
\end{equation*}
For $x,y,z\in\RR$, one can observe that the function $G$ defined on
$\RR^{3}$ by
\begin{equation*}
G(u,v,w) = K\left(\frac{x - u}{h_{n}}\right) K\left(\frac{y -
v}{h_{n}}\right) K\left(\frac{z - w}{h_{n}}\right)
\end{equation*}
is Lipschitzian with $\|G\|_{Lip} \leq
(\|K\|_{\infty}^{2}\|K\|_{Lip})/h_{n}$. We have
\begin{equation*}
\widehat{P}_{n}\left(x,y,z\right) - P(x,y,z) =
\frac{\widehat{f}_{n}(x,y,z) - f(x,y,z)}{\tilde{D}_{n}} +
\frac{f(x,y,z)(\nu(x) -
\tilde{D}_{n})}{\nu(x)\tilde{D}_{n}}.
\end{equation*}
Now using the decomposition
\begin{multline*}
\widehat{f}_{n}(x,y,z) - f(x,y,z) = \left(\widehat{f}_{n}(x,y,z) -
\EE\left[\widehat{f}_{n}(x,y,z)\right]\right) \\ +
\left(\EE\left[\widehat{f}_{n}(x,y,z)\right] - f(x,y,z)\right)
\end{multline*}
and the convergence of $\EE\left[\widehat{f}_{n}(x,y,z)\right]$ to
$f(x,y,z)$, we obtain a deviation inequality for
$|\widehat{P}_{n}\left(x,y,z\right) - P(x,y,z)|$ similar to that
obtained at the Proposition \ref{prop:dev_ker}.
%same strategy as in
%the proof of the Proposition \ref{prop:dev_ker}, one can prove that
%\begin{equation*}
%\PP\left(|\widehat{P}_{n}\left(x,y,z\right) - P(x,y,z)| > r\right)
%\lesssim \exp\left(-r^{2}|\TT_{n}|h_{n}^{10}\right)
%\end{equation*}
%up to a multiplicative constant which depends on the parameters. The
%order of convergence of this last inequality is very bad compared
%with those obtained in the Proposition \ref{prop:dev_ker}.

When the density $g_{\varepsilon}$ of
$(\varepsilon_{2},\varepsilon_{3})$ is known, another strategy for
the estimation of the $\TT$-transition probability is to observe
that $P(x,y,z) = g_{\varepsilon}(y-f_{0}(x),z-f_{1}(x))$. An
estimator of $P(x,y,z)$ is then given by $\widehat{P}_{n}(x,y,z) =
g_{\varepsilon}(y-\widehat{f}_{0,n}(x),z-\widehat{f}_{1,n}(x))$
where $\widehat{f}_{0,n}$ and $\widehat{f}_{1,n}$ are estimators
defined above. If $g_{\varepsilon}$ is Lipschitzian, we have
\begin{equation*}
|\widehat{P}_{n}\left(x,y,z\right) - P(x,y,z)| \leq
\|g_{\varepsilon}\|_{Lip}\left(|\widehat{f}_{0,n}(x) - f_{0}(x)| +
|\widehat{f}_{1,n}(x) - f_{1}(x)|\right)
\end{equation*}
and the deviation inequalities for
$|\widehat{P}_{n}\left(x,y,z\right) - P(x,y,z)|$ are thus of the
same order that those given by the Proposition \ref{prop:dev_ker}.
\end{remark}

\bigskip

\noindent{\bf Acknowledgements}\\ S.V. Bitseki Penda sincerely thanks Labex Ma\-th\'ematiques Hadamard. A. Guillin was supported  by the French ANR STAB project.

%\nocite{*}
\addcontentsline{toc}{section}{References}
\bibliography{transportbif}
\bibliographystyle{plain}
\end{document}